\newcommand{\R}{\mathbb{R}}
\newcommand{\C}{\mathbb{C}}
\newcommand{\Z}{\mathbb{Z}}
\newcommand{\bigO}{\mathcal{O}}
\newcommand{\RB}{BaMRAM}
\newcommand{\RS}{SpaMRAM}
\newcommand{\norm}[1]{\left\lVert#1\right\rVert}
\newcommand{\vct}[1]{\bm{#1}}
\newcommand{\mtx}[1]{\bm{#1}}
\newcommand{\flr}[1]{\left\lfloor {#1} \right\rfloor}
\newcommand{\cei}[1]{\left\lceil{#1}\right\rceil}
\DeclareMathOperator*{\argmin}{arg\,min}
\newcommand{\ignore}[1]{}
\newtheorem{theorem}{Theorem}[]
\newtheorem{proposition}[theorem]{Proposition}
\newtheorem{remark}[theorem]{Remark}
\def\tsc#1{\csdef{#1}{\textsc{\lowercase{#1}}\xspace}}
\begin{document}
\let\WriteBookmarks\relax
\def\floatpagepagefraction{1}
\def\textpagefraction{.001}
\shorttitle{Approximating Sparse Matrices and their Functions using Matrix-vector products}
\shortauthors{T. Park and Y. Nakatsukasa}

\title [mode = title]{Approximating Sparse Matrices and their Functions using Matrix-vector products}                      



\author[1]{Taejun Park}[type=editor
                        ]
\cormark[1]
\fnmark[1]
\ead{taejun.park@epfl.ch}


\affiliation[1]{organization={Institute of Mathematics},
                addressline={EPF Lausanne}, 
                city={1015 Lausanne},
                country={Switzerland}}

\author[2]{Yuji Nakatsukasa}[type=editor]
\fnmark[2]

\ead{nakatsukasa@maths.ox.ac.uk}


\affiliation[2]{organization={Mathematical Institute},
                addressline={University of Oxford}, 
                city={Oxford},
                postcode={OX2 6GG},
                country={UK}}


\cortext[cor1]{Corresponding author}
\fntext[fn1]{This work was done partially while the author was at the University of Oxford. TP is supported by the RandESC project funded by the Swiss Platform for Advanced Scientific Computing. TP was also supported by the Heilbronn Institute for Mathematical Research.}
\fntext[fn2]{YN is supported by EPSRC grants EP/Y010086/1 and EP/Y030990/1.}


\begin{abstract}
The computation of a matrix function $f(A)$ is an important task in scientific computing appearing in machine learning, network analysis and the solution of partial differential equations. In this work, we use only matrix-vector products $x\mapsto Ax$
to approximate functions of sparse matrices and matrices with similar structures such as sparse matrices $A$ themselves or matrices that have a similar decay property as matrix functions. We show that when $A$ is a sparse matrix with an unknown sparsity pattern, techniques from compressed sensing can be used under natural assumptions. Moreover, if $A$ is a banded matrix then certain deterministic matrix-vector products can efficiently recover the large entries of $f(A)$. We describe an algorithm for each of the two cases and give error analysis based on the decay bound for the entries of $f(A)$. We finish with numerical experiments showing the accuracy of our algorithms.
\end{abstract}



\begin{keywords}
matrix function \sep banded matrix \sep sparse matrix \sep matrix-vector products \sep compressed sensing
\end{keywords}

\maketitle

\section{Introduction} \label{sec:intro}
The evaluation of matrix functions is an important task arising frequently in many areas of scientific computing. They appear in the numerical solution of partial differential equations \cite{pde1,pde2,pde3}, network analysis \cite{netanal1,EstradaHigham2010}, electronic structure calculations \cite{elec2,elec1} and statistical learning \cite{CortinovisKressner2021,Wengeretal2022}, among others. In some problems, the matrix of interest has some underlying structure such as bandedness or sparsity. For example, in graphs with community structure one requires functions of sparse matrices \cite{Newman2006} and for electronic structure calculations one computes functions of banded matrices \cite{elec2,elec1}. In this paper, we consider the problem of approximating $f(\mtx{A})$ through matrix-vector products $\mtx{x}\mapsto\mtx{Ax}$ only, assuming $\mtx{A}$ is sparse or banded. 

Let $\mtx{A}\in \R^{n\times n}$ and $f$ be a function that is analytic on the closure of a domain $\Omega \subset \C$ that contains the eigenvalues of $\mtx{A}$. Then the matrix function $f(\mtx{A})$ is defined as
\begin{equation*}
    f(\mtx{A}) = \frac{1}{2\pi i} \int_{\partial \Omega} f(z)(z\mtx{I}-\mtx{A})^{-1} dz,
\end{equation*} where $\mtx{I}$ denotes the identity matrix and $(z\mtx{I}-\mtx{A})^{-1}$ is called the resolvent matrix of $\mtx{A}$. If $\mtx{A}\in \R^{n\times n}$ is diagonalizable with $\mtx{A} = \mtx{X}\mtx{D}\mtx{X}^{-1}$ where $\mtx{X}$ is the eigenvector matrix and $\mtx{D}$ is the diagonal matrix containing the eigenvalues $\{\lambda_i\}_{i=1}^n$ of $\mtx{A}$, then
\begin{equation*}
    f(\mtx{A}) = \mtx{X}f(\mtx{D})\mtx{X}^{-1}, \hspace{1em} f(\mtx{D}) =  \begin{bmatrix}
    f(\lambda_1) & & \\
    & \ddots & \\
    & & f(\lambda_n)
    \end{bmatrix}.
\end{equation*}
There are many other equivalent ways of defining $f(\mtx{A})$, see for example \cite{matfunchigham}. 

When the matrix dimension is large, explicitly computing $f(\mtx{A})$ using standard algorithms \cite{matcomp,matfunchigham} becomes prohibitive in practice, usually requiring $O(n^3)$ operations. Moreover, the matrix $\mtx{A}$ may also be too large to store explicitly. In this scenario, $\mtx{A}$ is often only available through matrix-vector products and $f(\mtx{A})\vct{b}$ \footnote{We note that the matrix-vector product $f(\mtx{A})\vct{b}$ is not computed by explicitly forming $f(\mtx{A})$ and then multiplying it by $\vct{b}$. Instead, it is approximated using matrix-vector products involving $\mtx{A}$ alone, typically through Krylov subspace methods.} is used to infer information about $f(\mtx{A})$ \cite{Guttel2013,GuttelKressnerLund2020}. In this paper, we show that the whole matrix $f(\mtx{A})$ can still be approximated in a sparse format using $\ll n$ matrix-vector products with $\mtx{A}$ if $\mtx{A}$ is sparse and $f$ is smooth. We exploit the decay bounds for the entries of $f(\mtx{A})$ \cite{Benzi2007} to recover its large entries using matrix-vector products; see also \cite{Benzi2016}. We primarily study functions of sparse matrices with general sparsity pattern and matrices with few large entries in each row/column (see Section \ref{sec:sparse}), but also consider functions of banded matrices and matrices with off-diagonal decay as a special case (see Section \ref{sec:band}). Our work is particularly useful if $f(\mtx{A})$ is sparse or has only a few entries that are large in each row/column or if $\mtx{A}$ is banded with bandwidth $k \ll n$. Throughout this work, we assume that the function $f$ is analytic on the closure of a domain that contains the eigenvalues of $\mtx A$.

Let $\mtx{A}$ be a diagonalizable, sparse matrix with some sparsity pattern and $f$ (either $f:\mathbb{R}\rightarrow \mathbb{R}$ or $f:\mathbb{C}\rightarrow \mathbb{C}$) a function that is analytic in a region containing the spectrum of $\mtx{A}$. Depending on the sparsity pattern of $\mtx{A}$ and the smoothness of $f$, some entries of $f(\mtx{A})$ can be shown to be exponentially smaller than others \cite{Benzi2007} and we can view them as noise. For a sparse matrix with a general sparsity pattern, it can be difficult to know the support (location of the large entries) of $f(\mtx{A})$ in advance, for example, if the sparsity pattern of $\mtx{A}$ is unknown. However, we can write $f(\mtx{A}) = \widehat{f(\mtx{A})}+ \mtx{E}$ with the matrix $\mtx{E}$ having only small entries in magnitude and compute $\widehat{f(\mtx{A})}$ by viewing $\widehat{f(\mtx{A})}$ as signals and $\mtx{E}$ as noise using \emph{compressed sensing} row by row or column by column. Compressed sensing is a technique in signal processing for efficiently reconstructing a signal which is sparse in some domain by finding solutions to underdetermined linear systems \cite{CandesRombergTao06,CandesTao06, Donoho06}. As a special case (by taking $f(\mtx{A})=\mtx{A}$), our algorithm can be used for recovering a sparse matrix $\mtx{A}$ itself, where $\mtx{A}$ is accessed only through matrix-vector multiplications $\vct{x}\mapsto \mtx{A}\vct{x}$. The details will be discussed in Section \ref{sec:sparse}.

Let $\mtx{A}$ now be a banded matrix. Then $f(\mtx{A})$ can usually be well approximated by a banded matrix \cite{DMS1984}. There are theoretical results that confirm this property \cite{BenziGolub}, which show that the entries of $f(\mtx{A})$ decay exponentially away from the main diagonal. The observation is that $\left|[f(\mtx{A})]_{ij}\right| \ll \left|[f(\mtx{A})]_{ii}\right|,\left|[f(\mtx{A})]_{jj}\right|$ for $|i-j|\gg 1$, that is, the large entries are typically located not too far away from the main diagonal. Therefore, we can dismiss the exponentially small entries and find a matrix that recovers the large entries of $f(\mtx{A})$, i.e., find a matrix $\widehat{f(\mtx{A})}$ such that the difference $f(\mtx{A})-\widehat{f(\mtx{A})}$ only contains small entries in magnitude. We can show that the large entries of $f(\mtx{A})$ can be captured through matrix-vector products with the following simple deterministic matrix
\begin{equation*}
    \mtx{I}_{n}^{(s)}:= \left[\mtx{I}_s,\mtx{I}_s,...,\mtx{I}_s,\mtx{I}_s(:,1:\{n/s\}\cdot s)\right]^T \in \R^{n\times s}
\end{equation*} where $s<n$, $\mtx{I}_s$ is the $s\times s$ identity matrix and $\mtx{I}_s(:,1:\{n/s\}\cdot s)$ are the first $\{n/s\}\cdot s$ columns of $\mtx{I}_s$, which ensures that $\mtx{I}_{n}^{(s)}$ has exactly $n$ rows. Here $\{x\}$ is the fractional part of $x$. This is unlike the sparse case where we now have significant information on the support of $f(\mtx{A})$. This set of vectors $\mtx{I}_{n}^{(s)}$ are identical to the probing vectors proposed in \cite{FrommerSchnimmelSchweitzer2021} for estimating functions of banded matrices and \cite{ColemanMore1983,CPR1974} for estimating sparse, banded Jacobian matrices. If we let $s$ be proportional to the bandwidth of $\mtx{A}$, the matrix-vector products $f(\mtx{A})\mtx{I}_n^{(s)}$ capture the large entries near the main diagonal, giving us a good approximation for $f(\mtx{A})$. We discuss the details in Section \ref{sec:band}.

Both cases require matrix-vector products with $f(\mtx{A})$, i.e., $\vct{x}\mapsto f(\mtx{A})\vct{x}$. Computing this matrix-vector product to high accuracy is crucial for our algorithms. Popular methods include the standard (polynomial) Krylov method \cite{matfunchigham,Saad92}, and contour integration \cite{matveccontour}. In this paper, we discuss three functions: the exponential $e^{\mtx{A}}$, the square root function $\sqrt{\mtx{A}}$ and the logarithm $\log(\mtx{A})$. For the exponential we use the polynomial Krylov method, and for the square root function and the logarithm, we use contour integration as in {\cite[method 2]{matveccontour}} as both functions have a branch cut on the negative real axis $(-\infty,0]$, making them difficult to approximate using polynomials if $\mtx{A}$ has eigenvalues close to $0$. For Krylov methods, block versions can also be used \cite{FrommerLundSzyld2018}, which compute $f(\mtx{A})\mtx{B} \in \R^{n\times s}$ at once instead of column by column. Although the computation of the matrix-vector product $f(\mtx{A})\vct{b}$ is important and crucial to the approximation of $f(\mtx{A})$, we do not discuss the details further as they are not the focus of this paper; see for example \cite[Ch.13]{matfunchigham}.

\subsection{Existing methods}
First, Benzi and Razouk \cite{Benzi2007} describe an algorithm that exploits the decay property by first finding a good polynomial approximation $p$ to $f$, for example using Chebyshev interpolation, and computing $p(\mtx{A})$ while using a dropping strategy to keep the matrix as sparse as possible. However, this method can quickly become infeasible if $f$ cannot be well approximated by a low-degree polynomial or if sparse data structures and arithmetic is not available. Frommer, Schimmel and Schweitzer \cite{FrommerSchnimmelSchweitzer2021} use probing methods to approximate matrix functions $f(\mtx{A})$ and its related quantities, for instance the trace of $f(\mtx{A})$. The method finds probing vectors to estimate the entries of $f(\mtx{A})$ by partitioning the vertices of the directed graph $G(\mtx{A})$ associated with the sparse matrix $\mtx{A}$ using graph coloring and the sparsity pattern of $\mtx{A}$. Our Section~\ref{sec:band} on banded matrices can be seen as a special case of~\cite{FrommerSchnimmelSchweitzer2021}, but as banded matrices arise naturally in applications and come with strong theoretical results, we make the sensing matrix explicit and study them in detail. The strategy proposed in \cite{FrommerSchnimmelSchweitzer2021} costs $\bigO(\Delta^d n)$ to obtain the graph coloring where $\Delta$ is the maximal degree of $G(\mtx{A})$ and $d$ is a distance parameter. In the special case when $\mtx{A}$ is a banded matrix the graph coloring can be obtained with $\bigO(n)$ cost. Our work and \cite{FrommerSchnimmelSchweitzer2021} satisfies a similar theoretical accuracy bound up to a low-degree polynomial factor in $n$ (cf. Section 3 of \cite{FrommerSchnimmelSchweitzer2021} and Theorem \ref{thm:gen} in our work), but we use compressed sensing to estimate the large entries of each row/column.
Cortinovis, Kressner and Massei \cite{CortinovisKressnerMassei2022} use a divide-and-conquer approach to approximate matrix functions where the matrix is either banded, hierarchically semiseparable or has a related structure. The divide-and-conquer method is based on (rational) Krylov subspace methods for performing low-rank updates of matrix functions. Under the assumption that the minimal block size in the divide-and-conquer algorithm is $\bigO(1)$ and the low-rank update in the Krylov method converges in $\bigO(1)$ iterations, the algorithm runs with complexity $\bigO(nk^2)$ for $k$-banded matrices.

There are other related works that use matrix-vector products to recover matrices with certain properties or using compressed sensing to recover matrices with only few large entries. In \cite{Townsend22,LetvittMartinsson2022,Woodruffetal2021}, the authors devise algorithms or strategies based on matrix-vector products for recovering structured matrices such as hierarchical low-rank matrices and Toeplitz matrices or for answering queries such as whether an unknown matrix is symmetric or not. In particular, Curtis, Powell and Reid \cite{CPR1974} and later Coleman and Mor\'e \cite{ColemanMore1983} use a certain graph coloring of the adjacency graph of a sparse or banded matrix to approximate sparse or banded Jacobian matrices using matrix-vector products. Bekas, Kokiopoulou and Saad \cite{BekasKokiopoulouSaad2007} suggested using the same set of coloring as the banded case to recover matrices with rapid off-diagonal decay.

In \cite{HermanStrohmer2009,PfranderRauhutTanner2008}, the authors formulate an algorithm based on compressed sensing techniques to recover sparse matrices with at most $\bigO(sn/\log^2 n)$ nonzero entries using $s$ matrix-vector products (See {\cite[Thm. 6.3]{PfranderRauhutTanner2008}}). This method uses a vectorized form of the matrix in question after a certain transformation, which can become expensive. Lastly, Dasarathy, Shah and Bhaskar \cite{Dasarathyetal2015} use compressed sensing techniques by applying compression (sensing operators) on both sides, i.e. $\mtx{Y}_1 \mtx{X}\mtx{Y}_2^T$, to recover an unknown sparse matrix $\mtx{X}$. The authors use $\bigO(\sqrt{n}\log n)$ matrix-vector products with $\mtx{X}$ where the number of nonzero entries of $\mtx{X}$ is $\bigO(n)$. This can become prohibitive for large $n$ as computing matrix-vector products is usually the dominant cost.

\subsection{Contributions}
Our first contribution is devising strategies for approximating (or sometimes recovering exactly, e.g. for sparse matrices) functions of sparse matrices and matrices with similar structures, e.g., matrices with only few large entries in each row/column or matrices with rapid off-diagonal decay, which is typically the case when the function $f$ is analytic on the closure of a domain that contains the eigenvalues of $\mtx A$. We provide two strategies, one for functions of sparse matrices or matrices with only few large entries in each row/column and another for functions of banded matrices or matrices with off-diagonal decay. The two strategies are based on approximating the large entries of the matrix in question using matrix-vector products only. This is advantageous if the matrix dimension is too large to store explicitly or the matrix is only available through matrix-vector products. For general sparse matrices and their functions, we use compressed sensing to estimate the large entries of each row/column without needing the sparsity pattern of $\mtx{A}$. For the banded case, our work is analogous to the previous work done in \cite{ColemanMore1983,CPR1974, FrommerSchnimmelSchweitzer2021} but we give a focused treatment of banded matrices.

Our second contribution is designing simple algorithms, Algorithm \ref{alg:gen} (\RS) for sparse matrices with general sparsity pattern and Algorithm \ref{alg:band} (\RB) for banded matrices, with the $2$-norm error bounds based on the decay structure of the matrix. We show that \RS{} converges quickly as long as the matrix has only few large entries in each row/column and that \RB{} converges exponentially with the number of matrix-vector products. Assuming that $f$ is sufficiently smooth so that the Krylov method converges in $\bigO(1)$ iterations when computing $f(\mtx{A})\mtx{b}$, \RS{} runs with complexity $\bigO(n^2\log n)$ for functions of general sparse matrices and \RB{} runs linearly in $n$ with complexity $\bigO(nk^2)$ for functions of $k$-banded matrices. For banded matrices, \RB{} runs with complexity $\approx 4nk^2$ and for functions of banded matrices, \RB{} has the same complexity, $\bigO(nk^2)$, as \cite{CortinovisKressnerMassei2022}. The output of the algorithm is also sparse and has $\bigO(nk)$ nonzero entries where $k$ is the number of nonzero entries recovered per row/column, which can be advantageous in many situations as $f(\mtx{A})$ need not be exactly sparse even if $\mtx{A}$ is sparse.

\subsection{Notation}
Throughout, we write $\flr{x}$ and $\{x\}$ to denote the floor function of $x$ and the fractional part of $x$ respectively.
We use $\norm{\cdot}_p$ where $1\leq p\leq \infty$ to denote the $p$-norm for vectors and matrices and define $\norm{\cdot}_{\max}$ by $\norm{\mtx{A}}_{\max}= \max\limits_{i,j}|A_{ij}|$ where $\mtx{A} \in \R^{m\times n}$ is a matrix. We use $\norm{\vct{v}}_0$ to denote the number of nonzeros in the vector $\vct{v}\in \R^n$. Lastly, we use MATLAB style notation for matrices and vectors. For example, for the $k$th to $(k+j)$th columns of a matrix $\mtx{A}$ we write $A(:,k:k+j)$.

\section{Approximation for functions of general sparse matrices} \label{sec:sparse}
Let $\mtx{A}$ be a diagonalizable sparse matrix with some sparsity pattern. Suppose that $f$ is analytic in a region containing the spectrum of $\mtx{A}$. Then depending on the sparsity pattern of $\mtx{A}$, the entries of $f(\mtx{A})$ decay exponentially away from certain small regions of the matrix. More formally, associate the unweighted directed graph $G(\mtx{A}) = (V,E)$ to $\mtx{A} = [A_{ij}]$ so that $\mtx{A}$ is the adjacency matrix of $G(\mtx{A})$. $V$ is the vertex set consisting of integers from $1$ to $n$ and $E$ is the edge set containing all ordered pairs $(i,j)$ with $A_{ij} \neq 0$. We define the distance $d_G (i,j)$ between vertex $i$ and vertex $j$ to be the shortest directed path from $i$ to $j$.\footnote{Note that if $\mtx{A}$ is not symmetric then $d_G (i,j)\neq d_G (j,i)$ in general.} We then have that there exist constants $C_0>0$ and $0<\lambda_0 <1$ such that 
\begin{equation} \label{eq:fsparsedecay}
        \left|[f(\mtx{A})]_{ij}\right| < C_0 \lambda_0^{d_G (i,j)}
\end{equation} for all $i,j = 1,2,...,n$. For further details see \cite{Benzi2007}. This result tells us that if $d_G (i,j)\gg 1$ then $|[f(\mtx{A})]_{ij}|\ll 1$, which in turn means that depending on the sparsity pattern of $\mtx{A}$, there may only be few large entries in $f(\mtx{A})$ with the rest being negligible. With this observation we can aim to recover the dominating entries of each row/column of $f(\mtx{A})$. We solve this problem using \emph{compressed sensing} by interpreting small entries as noise and the large entries as signals.\footnote{Another possibility would be to identify the nonzero structure using the graph structure $(V,E)$ and $f$. This can quickly become intractable, and in some cases $f(\mtx{A})$ can be sparse despite the graph structure suggesting otherwise, so here we opt for a generic approach.}

Compressed sensing \cite{CandesRombergTao06,Donoho06} is a technique in signal processing for efficiently reconstructing a sparse signal. 
Let $\mtx{Y}\in \R^{n\times s}$ be a so-called \emph{sensing operator} and $\vct{v}\in \R^{n}$ be an unknown sparse vector. Then the problem of recovering $\vct{v}$ using the measurements $\vct{y} = \mtx{Y}^T\vct{v}$ is a combinatorial optimization problem given by
\begin{equation} \label{csprob}
    \min_{\vct{z}\in \R^n} \norm{\vct{z}}_0 \text{ subject to } \vct{y} = \mtx{Y}^T\vct{z}
\end{equation} where $\norm{\vct{z}}_0$ denotes the number of nonzeros in the vector $\vct{z}$. There are two established classes of algorithms, among others, to solve \eqref{csprob} in the compressed sensing literature, namely basis pursuit or $\ell_1$-minimization using convex optimization \cite{CandesTao05}, and greedy methods \cite{BlumensathDaviesRiling12}. For large $n$, convex optimization can become infeasible (especially given that we need to solve~\eqref{csprob} $n$ times to recover the whole matrix), so we use the cheaper option, greedy methods. Greedy methods are iterative and at each iteration attempt to find the correct location and the values of the dominant entries. 

Let $\vct{v}\in \R^n$ be a $k$-sparse vector, that is, a vector with at most $k$ nonzero entries. To recover $\vct{v}$ with theoretical guarantee we typically require at least $s = \bigO(k\log n)$ measurements. In practice, $s = \bigO(k)$ measurements seem to work well \cite{BlanchardTanner15}. Each measurement is taken through vector-vector multiply with $\vct{v}$. The three classes of frequently used sensing operators $\mtx{Y}$ are: a Gaussian matrix $\mtx{\mathcal{N}}$ with entries drawn from i.i.d. $\mathcal{N}(0,s^{-1})$, $s$ columns of the discrete cosine transform matrix $\mtx{\mathcal{C}}$ sampled uniformly at random and a sparse matrix $\mtx{\mathcal{S}}_\xi$ with $\xi$ nonzero entries per row with the support set drawn uniformly at random and the nonzero values drawn uniformly at random from $\pm \xi^{-1/2}$. Here, $\xi$ is the sparsity parameter for the sparse sensing operator.

There are a number of greedy methods available, including Normalized Iterative Hard Thresholding (NIHT) \cite{NIHT}, Hard Thresholding Pursuit (HTP) \cite{HTP} and Compressive Sampling Matching Pursuit (CoSaMP) \cite{CoSaMP}. Among the key differences and the trade-offs between the different methods is the cost per iteration in order to speed up support detection and the number of iterations for better asymptotic convergence rate. Many greedy methods have uniform recovery guarantees if the sensing operator $\mtx{Y}$ has sufficiently small restricted isometry constants \cite{csbook}.\footnote{Under mild assumptions on $s$, the three classes of sensing operators described in the previous paragraph have small restricted isometry constants with high probability.} This means that if we want to recover a matrix then we can use the same sensing operator $\mtx{Y}$ for the recovery of all rows of a matrix, which makes it much more efficient than recovering $n$ rows of a matrix individually using different sensing operators.

In this paper, we use NIHT as it is simple and has a low cost per iteration.\footnote{Other greedy methods such as CoSaMP have similar (or slightly stronger) theoretical guarantees as NIHT, but is generally more expensive; hence making NIHT more preferable in our setting ($k\ll n$ and $n$ problems to solve).} NIHT is an iterative hard thresholding algorithm which performs gradient descent and then hard thresholding at each iteration to update the support and the value of the support set: the $(N+1)$th iterate $\vct{v}^{(N+1)}$ of NIHT is 
  \begin{equation}
    \label{eq:NIHTeqn}
\vct{v}^{(N+1)}  =  H_k\left(\vct{v}^{(N)} +\mu \mtx{Y}\left(\vct{y}-\mtx{Y}^T\vct{v}^{(N)}\right)\right).
  \end{equation}

Here $H_k$ is the hard thresholding operator that sends all but the largest $k$ entries (in absolute value) of the input vector to $0$. The stepsize $\mu$ in NIHT is chosen to be optimal when the support of the current iterate is the same as the sparsest solution.
Assuming that $\mtx{Y}$ satisfies the restricted isometry property (RIP), then at iteration $N$, the output $\vct{v}^{(N)}$ of NIHT satisfies
\begin{equation*}
        \norm{\vct{v}-\vct{v}^{(N)}}_2 \leq 2^{-N}\norm{\vct{v}^{k}}_2 + 8 \epsilon_k,
    \end{equation*} where $\vct{v}^k$ is the best $k$-sparse vector approximation to $\vct{v}$ and
    \begin{equation*}
        \epsilon_k = \norm{\vct{v}-\vct{v}^k}_2 +\frac{1}{\sqrt{k}} \norm{\vct{v}-\vct{v}^k}_1,
    \end{equation*}
    and after at most $N^* = \cei{\log_2 \left(\norm{\vct{v}^k}_2/\epsilon_k\right)}$ iterations, we have
    \begin{equation} \label{eq:NIHTbound}
        \norm{\vct{v}-\vct{v}^{(N^*)}}_2 \leq 9\epsilon_k.
    \end{equation}
Further details about NIHT can be found in \cite{NIHT}. 

The dominating cost for the greedy methods is the matrix-vector product between the sensing operator or the transpose of the sensing operator and the unknown sparse signal, i.e., the cost of $\vct{v}\mapsto \mtx{Y} \vct{v}$ and $\vct{w} \mapsto \mtx{Y}^T\vct{w}$ respectively. If the sensing operator is a Gaussian $\mtx{\mathcal{N}}$ then the cost is $\bigO(ns)$, if $\mtx{Y}$ is a subsampled discrete cosine matrix $\mtx{\mathcal{C}}$ then the cost is $\bigO(n\log n)$ or $\bigO(n\log s)$ if we use the subsampled FFT algorithm \cite{rokhlintygert2008} and lastly, if $\mtx{Y}$ is a sparse matrix $\mtx{\mathcal{S}}_\xi$ then the cost is $\bigO(\xi n)$.

In our context, we are concerned with recovering the entire matrix. We now use compressed sensing row by row or column by column to approximate functions of sparse matrices.
\subsection{Strategy for recovering general sparse matrices}
In this section, we consider matrices $\mtx{B}\in \R^{n\times n}$ that satisfy
\begin{equation} \label{eq:sparsedecay}
    \left|B_{ij}\right| < C_B \lambda_B^{d(i,j)}
\end{equation} where $C_B>0$ and $0<\lambda_B<1$ are constants and $d(i,j) \in \R_{\geq 0} \cup \{+\infty\}$ is some bivariate function that captures the magnitude of the entries of $\mtx{B}$. Functions of sparse matrices $\mtx{B} = f(\mtx{A})$ with a smooth $f$ is a special case of $\mtx{B}$ that satisfies \eqref{eq:fsparsedecay}. Another special case is $\mtx{B} = \mtx{A}$, which recovers the sparse matrix $\mtx{A}$ itself. The zero entries of $\mtx{B}$ would correspond to $d(i,j) = \infty$ in \eqref{eq:sparsedecay}.

We are concerned with recovering the whole matrix. We approximate the dominant entries of each row of $\mtx{B}$ by taking measurements through matrix-vector products $\mtx{B}\vct{y}_i$ where $\vct{y}_i$ is the $i$th column of the sensing operator $\mtx{Y} \in \R^{n\times s}$.\footnote{We can also recover the dominant entries of $\mtx{B}$ column by column using $\mtx{B}^T\mtx{Y}$ if the matrix-vector product $\vct{x}\mapsto \mtx{B}^T\vct{x}$ is available.} Let $\texttt{CS}(\mtx{Y},\mtx{b}_{i}\mtx{Y},k)$ be a function that solves \eqref{csprob} where $\mtx{Y} \in \R^{n\times s}$ is the sensing operator, $\mtx{b}_{i}\mtx{Y} \in \R^{1\times s}$ are the measurements where $\mtx{b}_i$ is the $i$th row of $\mtx{B}$ 
(i.e., $\mtx{b}_{i}\mtx{Y}$ is the $i$th row of $\mtx{BY}$)
and $k$ is the sparsity parameter. For the compressed sensing algorithm $\texttt{CS}$, we can take, for example $\texttt{CS} = $ NIHT. The algorithm to recover the dominant entries of each row of $\mtx{B}$ is given below in Algorithm \ref{alg:gen}, which we call \RS{} 
(Sparse Matrix Recovery Algorithm using Matvecs).

\begin{algorithm}[ht]
  \caption{Sparse Matrix Recovery Algorithm using Matvecs (\RS)}
  \label{alg:gen}
  \begin{algorithmic}[1]
\Require{Matrix-vector product $\texttt{mvp}: \vct{x}\mapsto \mtx{B}\vct{x}$; \newline $\mtx{Y}$ sensing operator; \newline $k$ sparsity parameter; \newline $\texttt{CS}$ function that solves \eqref{csprob} 
from input $(\mtx{Y},\vct{y},k)$
using a compressed sensing algorithm, e.g. NIHT}
\Ensure{$\widehat{\mtx{B}}$ sparse matrix with at most $k$ nonzero entries in each row.}
\vspace{0.5pc}
\State Draw a sensing operator $\mtx{Y}\in \R^{n\times s}$, e.g. Gaussian. \Comment{rec. $s = \lceil 2k\log(n/k) \rceil$}
\State Compute matrix-vector products $\mtx{F}_s = \texttt{mvp}(\mtx{Y})$
\For{$i = 1,2,...,n$}
    \State $\displaystyle \widehat{\mtx{B}}(i,:) = \texttt{CS}(\mtx{Y},\mtx{F}_s(i,:),k)$ \Comment{Using compressed sensing to approximate the rows of $\mtx{B}$}
\EndFor
\end{algorithmic}
\end{algorithm}

The algorithm \RS{} is a straightforward application of compressed sensing techniques used to recover a sparse matrix instead of a vector, particularly when the goal is to recover $\mtx{A}$ itself. We are nonetheless unaware of existing work that spells out explicitly, and \RS{} can be significantly more efficient than alternative approaches \cite{HermanStrohmer2009,PfranderRauhutTanner2008} that vectorize the matrix. 
Since most sensing matrices satisfy the RIP with failure probability exponentially decaying in $s$~\cite{csbook}, by the union bound we can conclude that the whole matrix can be recovered with high probability.

\subsubsection{Complexity of \RS{} (Alg. \ref{alg:gen})}
The complexity of \RS{} is $s$ matrix-vector products with $\mtx{B}$ and $n$ times the complexity of solving the compressed sensing problem. The complexity for solving $n$ compressed sensing problems using NIHT and the subsampled DCT matrix as $\mtx{Y}$ is $\bigO(n^2\log n)$, where we take the number of NIHT iterations to be $\bigO(1)$.

When $\mtx{B} = f(\mtx{A})$ for sparse $\mtx{A}$, we can evaluate $f(\mtx{A})\mtx{Y}$ using Krylov methods. Assuming that $f$ is analytic in a region containing the eigenvalues of $\mtx{A}$,\footnote{It also suffices for $f$ to be approximable to high accuracy with a low-degree polynomial.} the Krylov method converges (to sufficient accuracy) in $\bigO(1)$ iterations, then the complexity of \RS{} is $\bigO(s\cdot nnz(\mtx{A}) + n^2\log n) = \bigO(k\log n \cdot nnz(\mtx{A})+ n^2\log n)$ which consists of $\bigO( s\cdot nnz(\mtx{A})) = \bigO( k\log n \cdot nnz(\mtx{A}))$ flops for computing $s = \bigO(k\log n)$ matrix-vector products with $f(\mtx{A})$ and $\bigO(n^2 \log n)$ flops for solving $n$ compressed sensing problems. Here $nnz(\mtx{A})$ is the number of nonzero entries of $\mtx{A}$.

\subsubsection{Dependency of the compressed sensing solver in \RS{}}

The majority of the computational task in \RS{} (Algorithm \ref{alg:gen}) relies on compressed sensing. There are three main parameter choices in relation to compressed sensing for \RS{}: the sensing operator $\mtx{Y}$, the number of measurements $s$ and the compressed sensing solver \texttt{CS}. 
For the solver, we suggest NIHT as it is the simplest and has a low cost per iteration. For better asymptotic guarantee, but with a higher cost per iteration, we suggest CoSaMP or its variants, e.g. CSMPSP discussed in \cite{BlanchardTanner15}. For the sensing operator $\mtx{Y}$, all three choices mentioned previously are good choices. For robustness, we recommend the Gaussian matrix $\mtx{\mathcal{N}}$ and for speed, we recommend the sparse matrix $\mtx{\mathcal{S}}_{\xi}$ with the sparsity parameter $\xi = \min\{8,s\}$ if a good sparse matrix library is available, otherwise we recommend the subsampled DCT matrix $\mtx{\mathcal{C}}$. Lastly, for the number of measurements $s$ we recommend $s = \lceil 2k\log\left(\frac{n}{k}\right) \rceil$ for theoretical guarantee \cite{Donoho06,CandesRombergTao06} as suggested in \RS{}. However, a constant multiple of the sparsity parameter $k$, say $s = 10k$, is observed to work well in practice \cite{BlanchardTanner15,csbook}.

\subsubsection{A posteriori error estimate for \RS{}} \label{subsec:posterrspamram}

The behaviour of \RS{} can be difficult to grasp as it relies on various parameters that are sometimes implicit, e.g., \eqref{eq:sparsedecay}. Therefore, we can incorporate a posteriori error testing for \RS{} to observe how well it might perform on a particular problem. Let us try to estimate the error using known quantities. We have the sensing operator $\mtx{Y}$, the measurements $\mtx{F}_s\in \R^{n\times s}$ and the output of \RS{}, $\hat{\mtx{B}}$. If the matrix-vector products (line $2$ of \RS{}) have been computed exactly then we have $\mtx{B}\mtx{Y} = \mtx{F}_s$, so we try to estimate the error by determining how close the measurements computed using the output of \RS{}, i.e. $\hat{\mtx{B}}\mtx{Y}$, are to the computed measurements $\mtx{F}_s$. We define
\begin{equation}
    \delta_{RE}^{(S)} := \frac{\norm{\hat{\mtx{B}}\mtx{Y} - \mtx{F}_s}_2}{\norm{\mtx{F}_s}_2}
\end{equation} as the relative error of the computed solution of \RS{} quantified through the measurements. Note that the error estimate $\delta_{RE}^{(S)}$ can be computed a posteriori in $\bigO(nks)$ as $\hat{\mtx{B}}$ has at most $k$ nonzero entries in each row. The error estimate $\delta_{RE}^{(S)}$ can be used as a heuristic for how \RS{} performs. The main motivation for this approach is that a small number of Gaussian measurements estimate the norm well \cite{GrattonTitley-Peloquin2018,mt20}. In our case, we essentially have $\mtx{F}_s = \mtx{B}\mtx{Y}$ and $\hat{\mtx{B}}\mtx{Y}$ giving us $(\mtx{B}-\hat{\mtx{B}})\mtx{Y}$, which is a small number of Gaussian measurements of the absolute error $\mtx{B}-\hat{\mtx{B}}$ if $\mtx{Y}$ is the Gaussian sensing operator.\footnote{For theoretical guarantee, $\mtx{Y}$ needs to be independent from $\mtx{B}$ and $\hat{\mtx{B}}$ \cite{GrattonTitley-Peloquin2018}, however the output of \RS{}, $\hat{\mtx{B}}$ is dependent on $\mtx{Y}$. Nonetheless, the dependency is rather weak so $\delta_{RE}^{(S)}$ should be a good error estimate in practice. For theoretical guarantee, one can take, say $5$ extra measurements for error estimation.} Therefore $\delta_{RE}$ provides a good guidance for how small the error is for \RS{}. For the other sensing operators, the behaviour is similar \cite{mt20}. Later, we show in a numerical experiment (Figure \ref{fig:syneg}) in Section \ref{sec:numill} that $\delta_{RE}^{(S)}$ can estimate the performance of \RS{} well.

\subsubsection{Adaptive methods} There are adaptive methods that reduce the number of matrix-vector products required for sparse recovery, for example \cite{adaptive1,adaptive3,KrahmerWard2014,adaptive2} by making adaptive measurements based on previous measurements. However, in our context we are concerned with recovering the whole matrix and it is difficult to make adaptive measurements that account for every row/column of $\mtx{B}$ when we only have access to $\mtx{B}$ using matrix-vector products. A naive strategy is to make adaptive measurements for each row/column, but this requires at least $\bigO(n)$ matrix-vector products, which is computationally infeasible and futile as we can recover $\mtx{B}$ using $n$ measurements using the $n\times n$ identity matrix.

\subsubsection{Shortcomings}
There are some limitations with \RS. First, the entries of $f(\mtx{A})$ are only negligible when $d(i,j)\gg 1$. Indeed, there are matrices for which the sparsity pattern implies $d(i,j) = \bigO(1)$ for all or most $i,j$. An example is the arrow matrix which has nonzero entries along the first row, first column and the main diagonal. The sparsity pattern of the arrow matrix makes $d(i,j)\leq 2$, which can make \RS{} behave poorly as the a priori estimate from \eqref{eq:fsparsedecay} tells us that $f(\mtx{A})$ may have no entries that are significantly larger than others. A similar issue has been discussed in \cite{Dasarathyetal2015}, where the authors assume that the matrix we want to approximate is $d$-distributed, that is, the matrix has at most $d$ nonzero entries along any row or column. We will use a similar notion, which will be described in Subsection \ref{subsec:analgen} to analyze \RS.

\RS{} requires $k$ as an input. This may be straightforward from the sparsity pattern of $\mtx{A}$ and the smoothness of $f$, but in some cases a good choice of $k$ can be challenging. Moreover, when most rows of $\mtx{B}$ are very sparse but a few are much denser, $k$ would need to be large enough for the densest row to recover the whole matrix $\mtx{B}$. However, even with a small $k$, the sparse rows will be recovered.

Another downside is in the complexity of \RS{} for functions of sparse matrices. The cost of \RS{} is $\bigO(k\log n \cdot nnz(\mtx{A})+n^2\log n)$, which is dominated by the cost of solving $n$ compressed sensing problems, $\bigO(n^2\log n)$ but computing $f(\mtx{A})\mtx{I}_n$ where $\mtx{I}_n$ is the $n\times n$ identity matrix costs $\bigO(n\cdot nnz(\mtx{A}))$,
which is cheaper if $nnz(\mtx{A})=\bigO(n)$
and approximates $f(\mtx{A})$ with higher accuracy. However, if matrix-vector products with $\mtx{B} = f(\mtx{A})$ start dominating the cost of \RS, or when $\mtx{A}$ is not so sparse with $nnz(\mtx{A})> n\log n$, but $f(\mtx{A})$ is approximately sparse, then there are merits in using \RS. It is also worth noting that storing the measurements $f(\mtx{A})\mtx{Y}$ and only solving the compressed sensing problem to approximate specific row(s) when necessary would save computational time. This may be useful when we only need to approximate some subset of rows of $f(\mtx{A})$, but we do not know the indices in advance.

\subsection{Analysis of \RS{} (Alg. \ref{alg:gen})} \label{subsec:analgen}
The class of matrices $\mtx{B}$ that satisfy the decay bound \eqref{eq:sparsedecay} can be difficult to analyze as the behaviour of $d(i,j)$ can give non-informative bounds in \eqref{eq:sparsedecay}, i.e., there may be no entries that are small. For functions of matrices, the arrow matrix is an example, which has the sparsity pattern that makes $d_G(i,j)\leq 2$. When $\mtx{B}$ has entries that all have similar magnitudes, \RS{} fails to compute a good approximation to $\mtx{B}$. 

We now define a notion similar to $d$-distributed sparse matrices in \cite{Dasarathyetal2015} to analyze \RS. Let $d \in \Z^+$ be a distance parameter and define
\begin{equation*}
    k_d = \max_{1\leq i\leq n} \left\lvert \left\{ j: d(i,j) \leq d\right\} \right\rvert.
\end{equation*} When $\mtx{B} = f(\mtx{A})$, $k_d$ measures the maximum number of vertices that are at most distance $d$ away from any given vertex. $k_d$ also measures the combined sparsity of $\mtx{I}_n,\mtx{A},...,\mtx{A}^d$ because the cardinality of the union of the support locations in a fixed row in each of $\mtx{I}_n,\mtx{A},...,\mtx{A}^d$ will be at most $k_d$. If $k_d = \bigO(1)$ for $d$ large enough such that $f$ can be approximated by a polynomial of degree $d-1$ on $\mtx{A}$'s spectrum, then each row of $f(\mtx{A})$ would only have $\bigO(1)$ dominating entries. Using the definition of $k_d$, we now analyze \RS{} in Theorem \ref{thm:gen}.

\begin{theorem}[Algorithm \ref{alg:gen}] \label{thm:gen}
    Let $\mtx{B}\in\mathbb{R}^{n\times n}$ be a matrix satisfying
    \begin{equation} 
    |B_{ij}| < C_B \lambda_B^{d(i,j)} \tag{\ref{eq:sparsedecay}}
\end{equation} for constants $C_B>0$, $1>\lambda_B>0$ and $d(i,j) \in \R_{\geq 0} \cup \{+\infty\}$ is some bivariate function that captures the magnitude of the entries of $\mtx{B}$.
    Then the output of \RS, $\widehat{\mtx{B}}$, satisfies
    \begin{equation} \label{sparsebound2}
        \norm{\widehat{\mtx{B}}-\mtx{B}}_2 \leq 9C_B \left( n + \frac{n^{3/2}}{\sqrt{k_d}} \right) \lambda_B^{d+1}
    \end{equation}
    where 
    \begin{equation*}
        k_d = \max_i \left\lvert \left\{ j: d(i,j) \leq d\right\} \right\rvert
    \end{equation*} is the sparsity parameter in Algorithm \ref{alg:gen} with the distance parameter $d$ and the compressed sensing problem was solved row by row using NIHT  until \eqref{eq:NIHTbound} was satisfied.
\end{theorem}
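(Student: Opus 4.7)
The plan is to reduce the matrix-norm bound to a collection of per-row bounds obtained by invoking the NIHT guarantee \eqref{eq:NIHTbound}, and then assembling them via the Frobenius norm. Fix a row index $i$, and let $\mtx{b}_i^T$ denote the $i$th row of $\mtx{B}$. Because \RS{} applies NIHT row by row with sparsity parameter $k$ until \eqref{eq:NIHTbound} holds, we have, for the $i$th row of the output,
\[
\norm{\mtx{b}_i - \hat{\mtx{b}}_i}_2 \le 9\,\epsilon_k^{(i)}, \qquad \epsilon_k^{(i)} := \norm{\mtx{b}_i - \mtx{b}_i^k}_2 + k^{-1/2}\norm{\mtx{b}_i - \mtx{b}_i^k}_1,
\]
where $\mtx{b}_i^k$ is the best $k$-sparse approximation to $\mtx{b}_i$.

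The key step is to bound the tail norms $\norm{\mtx{b}_i - \mtx{b}_i^k}_2$ and $\norm{\mtx{b}_i - \mtx{b}_i^k}_1$ in terms of the decay parameters. Let $S_i = \{j : d(i,j)\le d\}$; by the definition of $k$ we have $|S_i|\le k$, so the restriction of $\mtx{b}_i$ to $S_i$ is a valid $k$-sparse vector. Since $\mtx{b}_i^k$ is optimal among all $k$-sparse approximations, both tail norms are controlled by the corresponding sums over the complement $S_i^c = \{j : d(i,j) > d\}$. Invoking \eqref{eq:sparsedecay} with $d(i,j)\ge d+1$ for $j\in S_i^c$ (which is immediate in the graph-distance setting, the primary use case), the naive bounds
\[
\norm{\mtx{b}_i - \mtx{b}_i^k}_2 \le C_B\sqrt{n}\,\lambda_B^{d+1}, \qquad \norm{\mtx{b}_i - \mtx{b}_i^k}_1 \le C_B\, n\,\lambda_B^{d+1}
\]
follow by replacing every summand by the uniform upper bound $C_B\lambda_B^{d+1}$ and using $|S_i^c|\le n$. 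Substituting these into $\epsilon_k^{(i)}$ gives $\epsilon_k^{(i)} \le C_B\lambda_B^{d+1}\bigl(\sqrt{n}+n/\sqrt{k}\bigr)$, and hence the per-row bound $\norm{\mtx{b}_i - \hat{\mtx{b}}_i}_2 \le 9C_B\lambda_B^{d+1}\bigl(\sqrt{n}+n/\sqrt{k}\bigr)$, uniformly in $i$.

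To conclude I would use $\norm{\mtx{B}-\hat{\mtx{B}}}_2 \le \norm{\mtx{B}-\hat{\mtx{B}}}_F$ together with $\norm{\mtx{B}-\hat{\mtx{B}}}_F^2 = \sum_{i=1}^n \norm{\mtx{b}_i-\hat{\mtx{b}}_i}_2^2 \le n\cdot \max_i \norm{\mtx{b}_i-\hat{\mtx{b}}_i}_2^2$, which contributes an additional $\sqrt{n}$ factor and produces exactly the claimed bound $9C_B\bigl(n + n^{3/2}/\sqrt{k}\bigr)\lambda_B^{d+1}$.

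The main obstacle, or at least the main point requiring care, is the step that pushes $d(i,j)>d$ to $\lambda_B^{d(i,j)}\le\lambda_B^{d+1}$: this is automatic when $d(i,j)$ is integer-valued (as for graph distance, the case of interest for $f(\mtx{A})$), but would require either integrality or a mild refinement of the bound for general $d(\cdot,\cdot)$. A secondary caveat is that the NIHT guarantee \eqref{eq:NIHTbound} is predicated on $\mtx{Y}$ having sufficiently small restricted isometry constants; the theorem implicitly assumes this event occurs, which holds with high probability for the sensing matrices recommended in \RS{}.
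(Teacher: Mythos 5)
Your proposal is correct and follows essentially the same route as the paper's proof: apply the NIHT guarantee \eqref{eq:NIHTbound} row by row, bound the tail terms $\norm{\vct{b}_i-\vct{b}_i^k}_2$ and $\norm{\vct{b}_i-\vct{b}_i^k}_1$ entrywise by $C_B\lambda_B^{d+1}$ using \eqref{eq:sparsedecay} and the definition of $k$, and assemble the rows with a $\sqrt{n}$ factor (the paper keeps $n-k$ in the intermediate step but relaxes to $n$ in the final bound, exactly as you do). The integrality caveat you flag (passing from $d(i,j)>d$ to $\lambda_B^{d(i,j)}\le\lambda_B^{d+1}$) is implicit in the paper's argument as well, so your write-up is, if anything, slightly more careful on that point.
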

\begin{proof}
    Define $\mtx{E} = \widehat{\mtx{B}}-\mtx{B}$. 
    Let $E(i,:)$ be the $i$th row of $\mtx{E}$. Then from \eqref{eq:NIHTbound} we get
    \begin{equation*}
        \norm{E(i,:)}_2 \leq 9\left(\norm{\vct{b}_i - \vct{b}_{i}^{k_d}}_2 +\frac{1}{\sqrt{k_d}} \norm{\vct{b}_i - \vct{b}_i^{k_d}}_1\right)
    \end{equation*} where $\vct{b}_{i}$ is the $i$th row of $\mtx{B}$ and $\vct{b}_{i}^{k_d}$ is the best $k_d$-term approximation to $\vct{b}_{i}$. Since $\vct{b}_{i}^{k_d}$ is the best $k_d$-term approximation to $\vct{b}_{i}$, we have 
    \begin{equation*}
        \norm{\vct{b}_{i} - \vct{b}_{i}^{k_d}}_{\max} \leq C_B \lambda_B^{d+1}
    \end{equation*} using \eqref{eq:sparsedecay}.
    Therefore
    \begin{equation*}
        \norm{\vct{b}_{i} - \vct{b}_{i}^{k_d}}_2 \leq \sqrt{n-k_d} C_B  \lambda_B^{d+1}, \qquad 
        \norm{\vct{b}_{i} - \vct{b}_{i}^{k_d}}_1 \leq (n-k_d)C_B\lambda_B^{d+1}.
    \end{equation*}
    Therefore
    \begin{align*}
        \norm{\mtx{E}}_2  &\leq \sqrt{n} \max_{1\leq i \leq n}\norm{E(i,:)}_2 \\
        &\leq 9\sqrt{n} \max_{1\leq i \leq n} \norm{\vct{b}_{i} - \vct{b}_{i}^{k_d}}_2 +\frac{1}{\sqrt{k_d}} \norm{\vct{b}_{i} - \vct{b}_{i}^{k_d}}_1 \\
        &\leq 9\sqrt{n} \left( C_B\sqrt{n-k_d} \lambda_B^{d+1} + C_B\frac{n-k_d}{\sqrt{k_d}} \lambda_B^{d+1} \right) \\
        &< 9C_B \left( n + \frac{n^{3/2}}{\sqrt{k_d}} \right) \lambda_B^{d+1}.
    \end{align*}
\end{proof}
\begin{remark} \*
\begin{enumerate}
    \item The bound in Theorem \ref{thm:gen} can be pessimistic as we can replace the $2$-norm bound by the Frobenius norm bound in \eqref{sparsebound2}. This can be shown by noting
    \begin{align*}
        \norm{\mtx{E}}_F  &\leq \sqrt{\sum_{1\leq i \leq n}\norm{E(i,:)}_2^2} \leq \sqrt{n} \max_{1\leq i \leq n}\norm{E(i,:)}_2 < 9C_B \left( n + \frac{n^{3/2}}{\sqrt{k_d}} \right) \lambda_B^{d+1}.
    \end{align*}
    \item The bound in \eqref{sparsebound2} can be improved if for each $i$, the nonzero entries of $\vct{b}_i - \vct{b}_i^{k_d}$ decay exponentially. This implies
    \begin{equation*}
        \norm{\vct{b}_i - \vct{b}_i^{k_d}}_{1} \lesssim \lambda_B^{d+1},\qquad 
        \norm{\vct{b}_i - \vct{b}_i^{k_d}}_{2} \lesssim \lambda_B^{d+1}
    \end{equation*}and the bound improves to
    \begin{equation*}
        \norm{\mtx{E}}_2 \lesssim \sqrt{n}\left(1+ \frac{1}{\sqrt{k_d}} \right) \lambda_B^{d+1}.
    \end{equation*}
    \item In general, the constants $C_B,\lambda_B$ and the bivariate function $d(i,j)$ are either unknown, implicit or too expensive to compute in applications. For matrix functions, Theorem \ref{thm:gen} relies on the decay bound by Benzi and Razouk \cite{Benzi2007}, which contains constants that are implicit as they depend on the function $f$ and the spectral property of the matrix $\mtx{A}$. Therefore, without a result that includes explicit constants, Theorem \ref{thm:gen} cannot serve as a priori estimate for Algorithm \ref{alg:gen}. However, the analysis in Theorem \ref{thm:gen} can still serve as a heuristic for when Algorithm \ref{alg:gen} will be useful.
\end{enumerate}
\end{remark}


\section{Approximation for functions of banded matrices} \label{sec:band}
Let $\mtx{A}$ be a diagonalizable banded matrix with upper bandwidth $k_1$ ($A_{ij} = 0$ if $j-i>k_1$) and lower bandwidth $k_2$ ($A_{ij} = 0$ if $i-j>k_2$).
This is a more restrictive case than Section~\ref{sec:sparse}, and so \RS{} can be used; however, in this case the sparsity pattern is predictable, and hence more efficient algorithms are possible.

 Suppose that $f$ is a function that is analytic in the region containing the spectrum of $\mtx{A}$. We then have that the entries of $f(\mtx{A})$ decay exponentially away from the main diagonal. More specifically, we have that there exists constants $C_1,C_2>0$ and $0<\lambda_1,\lambda_2<1$ dependent on $\mtx{A}$ and $f$ such that for $i< j$,
\begin{equation} \label{eq:ubandbound}
        \left|[f(\mtx{A})]_{ij}\right| < C_1 \lambda_1^{j-i}
    \end{equation} and for $i\geq j$,
    \begin{equation} \label{eq:lbandbound}
        \left|[f(\mtx{A})]_{ij}\right| < C_2 \lambda_2^{i-j}.
    \end{equation}
The proof of this result is based on polynomial approximation of $f$, together with the simple fact that powers of $\mtx{A}$ are banded if $\mtx{A}$ is. The exponents satisfy $\log \lambda_1^{-1} \propto k_1^{-1}$ and $\log \lambda_2^{-1} \propto k_2^{-1}$. For more details see \cite{Benzi2007,BenziSimoncini2015, PozzaSimoncini2019}. Unlike the general sparse case, here the location of the large entries is known, i.e., near the main diagonal.

We will exploit the exponential off-diagonal decay given above in \eqref{eq:ubandbound} and \eqref{eq:lbandbound} to efficiently recover functions of banded matrices using matrix-vector products. We first design a way to recover banded matrices using matrix-vector products similarly to how banded Jacobian matrices were recovered in \cite{ColemanMore1983,CPR1974} and then use the strategy used for the banded matrices along with the exponential off-diagonal decay to approximate functions of banded matrices and matrices with rapid off-diagonal decay using matrix-vector products. The idea of using the same strategy for matrices with rapid off-diagonal decay was suggested in \cite{BekasKokiopoulouSaad2007} and was later applied to functions of banded matrices in \cite{FrommerSchnimmelSchweitzer2021}.

\subsection{Strategy for recovering banded matrices} \label{sec:bandrecovery}
Let $\mtx{A}\in \R^{n\times n}$ be a banded matrix with upper bandwidth $k_1$ and lower bandwidth $k_2$. $\mtx{A}$ has at most $1+k_1+k_2$ nonzero entries in each row and column, which lie consecutively near the main diagonal. Therefore if we recover the $1+k_1+k_2$ consecutive nonzero entries near the main diagonal in each row or column then we can recover $\mtx{A}$. This motivates us to use a simple set of $s$ vectors, which to the authors' knowledge, originates from \cite{CPR1974} in the context of recovering matrices using matrix-vector products. The $s$ vectors are 
\begin{equation} \label{eq:repeatedid}
    \mtx{I}_{n}^{(s)} = [\mtx{I}_s,\mtx{I}_s,...,\mtx{I}_s,\mtx{I}_s(:,1:\{n/s\}\cdot s)]^T \in \R^{n\times s}
\end{equation} where $s<n$ is a positive integer and $\mtx{I}_s(:,1:\{n/s\}\cdot s)$ is the first few columns of $\mtx{I}_s$ to ensure $\mtx{I}_{n}^{(s)}$ has exactly $n$ rows. This set of $s$ vectors capture $s$ consecutive nonzero entries of $\mtx{A}$ in each row when right-multiplied to $\mtx{A}$ if the other $(n-s)$ entries are all zeros. By choosing $s = 1+k_1+k_2$ we are able to recover every nonzero entry of $\mtx{A}$. We illustrate this idea with an example.

\paragraph{Example} Let $\mtx{A}_6 \in \R^{6\times 6}$ be a banded matrix with upper bandwidth $2$ and lower bandwidth $1$. Then the matrix-vector products of $\mtx{A}_6$ with $\mtx{I}_6^{(1+2+1)}$ give
\begin{equation*}
    \mtx{A}_6 \mtx{I}_6^{(4)} = \begin{bmatrix}
        b_1 & c_1 & d_1 & 0 & 0 & 0 \\
        a_1 & b_2 & c_2 & d_2 & 0 & 0 \\
        0 & a_2 & b_3 & c_3 & d_3 & 0 \\
        0 & 0 & a_3 & b_4 & c_4 & d_4 \\
        0 & 0 & 0 & a_4 & b_5 & c_5 \\
        0 & 0 & 0 & 0 & a_5 & b_6 \\
    \end{bmatrix} \begin{bmatrix}
        1 & 0 & 0 & 0  \\
        0 & 1 & 0 & 0  \\
        0 & 0 & 1 & 0  \\
        0 & 0 & 0 & 1  \\
        1 & 0 & 0 & 0  \\
        0 & 1 & 0 & 0  \\
    \end{bmatrix} = \begin{bmatrix}
        b_1 & c_1 & d_1 & 0  \\
        a_1 & b_2 & c_2 & d_2  \\
        d_3 & a_2 & b_3 & c_3  \\
        c_4 & d_4 & a_3 & b_4  \\
        b_5 & c_5 & 0 & a_4  \\
        a_5 & b_6 & 0 & 0  \\
    \end{bmatrix}.
\end{equation*}

We observe in the example that every nonzero entry of $\mtx{A}_6$ in each row has been copied exactly once up to reordering onto the same row of $\mtx{A}_6\mtx{I}_6^{(4)}$. Since we know $\mtx{A}_6$ has upper bandwidth $2$ and lower bandwidth $1$, we can reorganize the entries of $\mtx{A}_6\mtx{I}_6^{(4)}$ to recover $\mtx{A}_6$ exactly. This idea can be generalized as follows \cite{ColemanMore1983,CPR1974}.\footnote{The work in \cite{ColemanMore1983,CPR1974} is based on partitioning/coloring of the adjacency graph of a banded matrix. In this work, we show the vectors more explicitly.}

\begin{proposition} \label{thm:bband}
    Let $\mtx{A} \in \R^{n\times n}$ be a banded matrix with upper bandwidth $k_1$ and lower bandwidth $k_2$. Then $\mtx{A}$ can be exactly recovered using $1+k_1+k_2$ matrix-vector products using $\mtx{I}_n^{(1+k_1+k_2)}$ as in \eqref{eq:repeatedid}.
\end{proposition}
\begin{proof}\footnote{The proof of Proposition \ref{thm:bband} can be simplified by arguing that we need at most $1+k_1+k_2$ colors to color the columns of $\mtx{A}$ such that all the columns of the same color have disjoint support. Now, probing all the columns of the same color simultaneously yields the result. In this work, we provide a lengthier proof of Proposition \ref{thm:bband} to make certain observations that lay the groundwork for our strategy to recover functions of banded matrices in Section \ref{subsec:stratband}.}
    Let $s = 1+k_1+k_2$ and for $1\leq i \leq n$ and $1\leq j \leq s$, define
    \begin{align*}
        t_{ij} &= \argmin_{\substack{1\leq j+st \leq n \\ t\in \Z}}\left|j+st-\left(i+(s-1)\left(\frac{1}{2}-\frac{k_1}{k_1+k_2}\right)\right)\right| \\
        &= \argmin_{\substack{1\leq j+st \leq n \\ t\in \Z}}\left|j+st-i-\frac{k_2-k_1}{2}\right|.
    \end{align*} Equivalently, $t_{ij}$ is the unique integer satisfying
    \begin{equation*}
        k_1 \geq j+st_{ij}-i \geq -k_2, \text{ and } 1\leq j+st_{ij} \leq n.
    \end{equation*}
   Then the $(i,j)$-entry of $\mtx{A}\mtx{I}_n^{(s)}$ is given by
    \begin{equation} \label{eq:keyterm}
        \left[\mtx{A}\mtx{I}_n^{(s)}\right]_{ij} = \sum_{\substack{1\leq j+st \leq n \\ t\in \Z}} A_{i,j+st} = A_{i,j+st_{ij}}
    \end{equation} since $A_{k\ell} = 0$ for all $k_2 < k-\ell$ and $k_1 < \ell-k$.
    
    Now we show that for all $1\leq i,r \leq n$ with $k_1\geq r-i \geq -k_2$, there is a unique entry of $\mtx{A}\mtx{I}_{n}^{(s)}$ equal to $A_{ir}$, i.e., we can use the entries of $\mtx{A}\mtx{I}_{n}^{(s)}$ to recover every nonzero entry of $\mtx{A}$. Let $1\leq i,r \leq n$ be integers such that $k_1 \geq r-i \geq -k_2$ and $1\leq j \leq s$ be a unique integer such that $r = st^*+j$ for some integer $t^*$. Then 
    \begin{equation*}
        \left[\mtx{A}\mtx{I}_n^{(s)}\right]_{ij} = \sum_{\substack{1\leq j+st \leq n\\ t\in \Z}} A_{i,j+st} = A_{i,j+st^{*}} = A_{ir}.
    \end{equation*} Therefore $t^* = t_{ij}$ and for each $k_1 \geq r-i \geq -k_2$, there is a unique entry $\left[\mtx{A}\mtx{I}_{n}^{(s)}\right]_{ij}$ equal to $A_{ir}$ with $j = r-st_{ij}$. Therefore $\mtx{A}$ can be exactly recovered using $1+k_1+k_2$ matrix-vector products using $\mtx{I}_n^{(1+k_1+k_2)}$.
\end{proof}

We now extend this idea to approximate functions of banded matrices and matrices with off-diagonal decay.

\subsection{Strategy for recovering functions of banded matrices} \label{subsec:stratband}
To accurately approximate $f(\mtx{A})$ using matrix-vector products, we want to find a set of vectors when multiplied with $f(\mtx{A})$, approximate the large entries of $f(\mtx{A})$. We use the same set of vectors as for the banded case, namely $\mtx{I}_n^{(s)}$. We assume that $s$ is an odd integer $s = 2s_0+1$ throughout this section. We will see that this set of simple vectors can approximate functions of banded matrices. This is the same as the observation made in \cite{BekasKokiopoulouSaad2007} and the work done later in \cite{FrommerSchnimmelSchweitzer2021}, which are based on graph coloring of the adjacency graph of a matrix with off-diagonal decay.

In this section, we consider matrices $\mtx{B} \in \R^{n\times n}$ with exponential\footnote{The analysis can be carried out similarly for any other type of decay, for example, algebraic.} off-diagonal decay with $\mtx{B} = f(\mtx{A})$ for banded $\mtx{A}$ being the special case. For simplicity we consider matrices with the same decay rate on both sides of the diagonal, that is, $\mtx{B}$ satisfies
\begin{equation} \label{eq:symdecayrate}
    \left|\mtx{B}_{ij}\right| < C \lambda^{|i-j|}
\end{equation} for some constant $C>0$ and $0<\lambda<1$. The results in this section can easily be extended to the more general case, that is, the class of matrices with entries satisfying \eqref{eq:ubandbound} and \eqref{eq:lbandbound}, i.e., the decay rate along the two sides of the diagonal are different. See Subsection \ref{sec:difdecayrate} for a further discussion.

The key idea is to revisit \eqref{eq:keyterm} in the proof of Proposition \ref{thm:bband}. For $1\leq i\leq n$ and $1\leq j \leq s$, recall that $t_{ij}$ is defined by
\begin{equation*}
    t_{ij} = \argmin_{\substack{1\leq j+st \leq n \\ t\in \Z}}\left|j+st-i\right|
\end{equation*}
when the decay rate is the same on both sides of the diagonal.\footnote{When the decay rate is different on each side of the diagonal we can carry out the analysis in a similar way by defining $t_{ij}$ differently. See Section \ref{sec:difdecayrate}. } $t_{ij}$ is an integer such that $i$ and $j+st_{ij}$ are as close as possible, i.e., the index closest to the main diagonal. \eqref{eq:keyterm} gives us
\begin{equation*}
    \left[\mtx{B}\mtx{I}_n^{(s)}\right]_{ij} = \sum_{\substack{1\leq j+st \leq n \\ t\in \Z}} B_{i,j+st}.
\end{equation*}
By \eqref{eq:symdecayrate}, the sum is dominated by $B_{i,j+st_{ij}}$ and the remaining terms decay exponentially. Therefore we can view this as 
\begin{equation*}
    \left[\mtx{B}\mtx{I}_n^{(s)}\right]_{ij} = \underbrace{B_{i,j+st_{ij}}}_{\text{Dominant term}}+\underbrace{\sum_{\substack{1\leq j+st \leq n \\ t\neq t_{ij}\in \Z}} B_{i,j+st},}_{\text{Sum of exponentially decaying terms}}
\end{equation*} which motivates us to use $\left[\mtx{B}\mtx{I}_n^{(s)}\right]_{ij}$ as an approximation for $B_{i,j+st_{ij}}$.

Now create a matrix $\widehat{\mtx{B}}$ by 
\begin{equation} \label{reconeq}
    \widehat{B}_{i,j+st_{ij}} = \left[\mtx{B}\mtx{I}_n^{(s)}\right]_{ij} = \sum_{\substack{1\leq j+st \leq n \\ t\in \Z}} B_{i,j+st}
\end{equation} for $|j+st_{ij}-i| \leq s_0$ where $1\leq i\leq n$ and $1\leq j\leq s$ and zero everywhere else. Notice that by construction, $\widehat{\mtx{B}}$ is an $s_0$-banded matrix. Roughly, the nonzero entries of the $i$th row of $\widehat{\mtx{B}}$ will be, with some small error, the largest entries of the $i$th row of $\mtx{B}$ up to reordering. The entries of $\widehat{\mtx{B}}$ approximate those of $\mtx{B}$ well because 
\begin{equation} \label{eq:enterr}
   \left\lvert \widehat{B}_{i,j} - B_{i,j}\right\rvert = \left\lvert\sum_{\substack{1\leq j+st \leq n \\  t \neq 0 \in \Z}} B_{i,j+st}\right\rvert \approx C\lambda^{s_0}
\end{equation} for $|j-i| \leq s_0$ and $1\leq i,j \leq n$. The largest error incurred from \eqref{eq:enterr} is at most $\bigO\left(\left|B_{i,i\pm s_0}\right|\right)$ for the $i$th row and the zero entries of $\widehat{\mtx{B}}$, i.e. the entries not covered by \eqref{eq:enterr}, incur error of at most $\bigO\left(\left|B_{i,i\pm s_0}\right|\right)$ also. Therefore,
\begin{equation} \label{erranal}
    \norm{\widehat{\mtx{B}}-\mtx{B}}_{\max} = \bigO\left(\max_{1\leq i\leq n}\left|B_{i,i\pm s_0}\right|\right).
\end{equation} This qualitative max-norm error bound of the approximant $\widehat{\mtx{B}}$ will be used to derive the $2$-norm error bound in Section \ref{subsec:bandanal}. The precise details will be laid out in Theorem \ref{thm:band}.

Figure \ref{fig:decay} illustrates the error analysis given above for $\mtx{B} = f(\mtx{A})$ where $f$ is the exponential function $e^x$ and $\mtx{A}\in \R^{1000\times 1000}$ is a symmetric $2$-banded matrix with entries drawn i.i.d. from $\mathcal{N}(0,n^{-1})$. This matrix has real eigenvalues with $\lambda(A)\in [-0.1919,0.2011]$.
The matrix-vector products $f(\mtx{A})\mtx{I}_n^{(s)}$ were evaluated using the polynomial Krylov method with $4,6$ and $8$ iterations (corresponding to polynomial approximation of degrees $3,5,7$ respectively), which give approximations to $f(\mtx{A})\mtx{I}_n^{(s)}$ of varying accuracy. In Figure \ref{fig1a}, we see the exponential decay in the entries of $f(\mtx{A})$ and in Figure \ref{fig1b}, we see that \eqref{erranal} captures the decay rate until the error is dominated by the error from computing matrix-vector products using the Krylov method. The stagnation of the $\max\limits_{i = 1,2,...,n} \left|[f(\mtx{A})]_{i,i\pm (s-1)/2}\right|$ curve is caused by machine precision.
\begin{figure}[!ht]
\subfloat{\label{fig1a}\includegraphics[scale = 0.45]{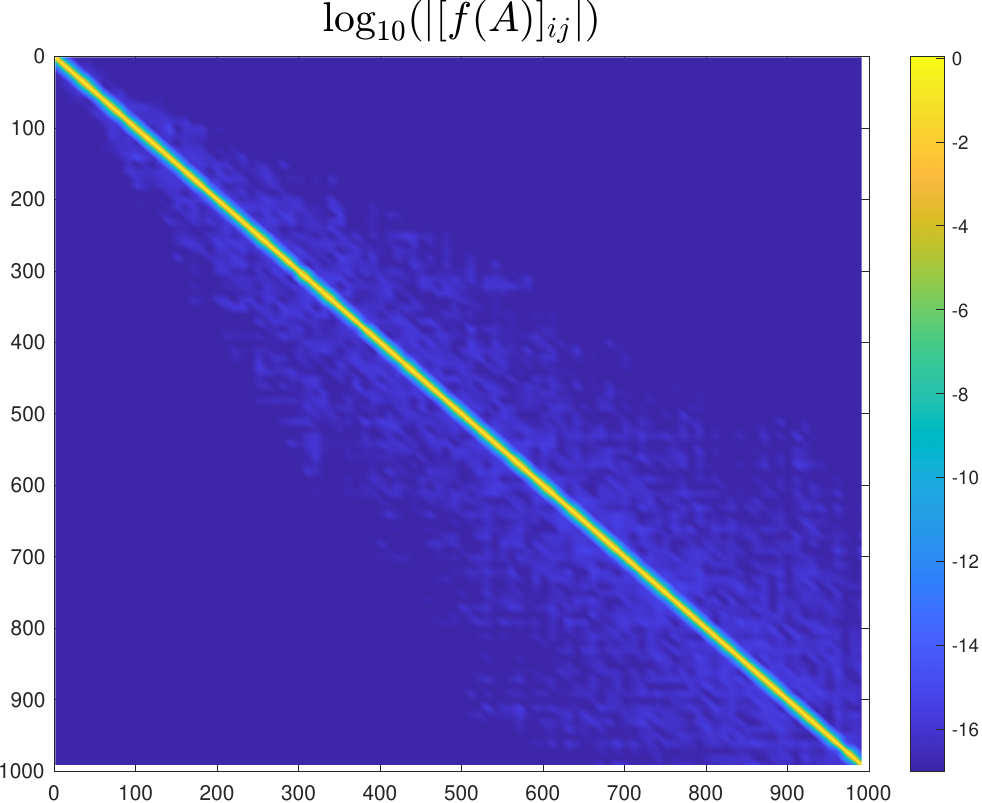}} \hfill
\subfloat{\label{fig1b}\includegraphics[scale = 0.45]{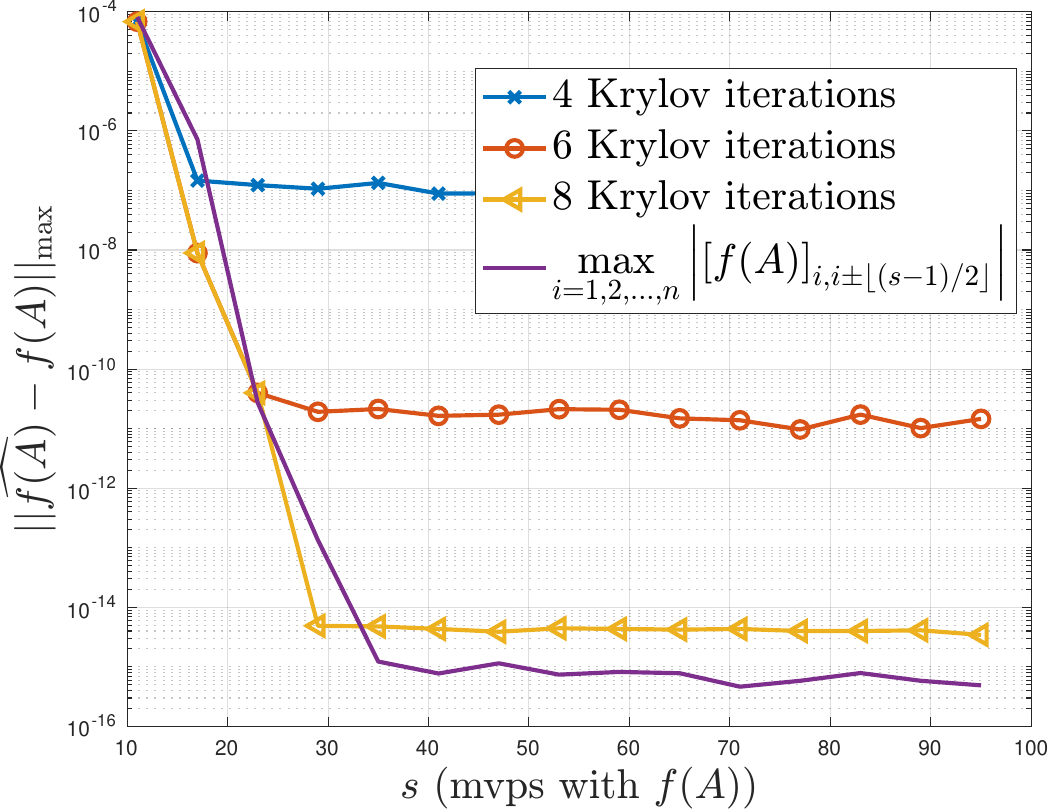}}
\centering 
\caption{$f$ is the exponential function $e^x$ and $\mtx{A}\in \R^{1000\times 1000}$ is a symmetric $2$-banded matrix. The left figure shows the exponential decay in the entries of $f(\mtx{A})$ and the right figure shows that the error analysis in \eqref{erranal} captures the decay rate until the error is dominated by the error from evaluating matrix-vector products using the Krylov method. The stagnation of the $\max\limits_{i = 1,2,...,n} \left|[f(\mtx{A})]_{i,i\pm (s-1)/2}\right|$ curve is caused by machine precision. The exponential decay in the max-norm error will be used to derive the $2$-norm error bound in Section \ref{subsec:bandanal}.}
\label{fig:decay}
\end{figure}

We now devise an algorithm based on the above analysis to approximate matrices that satisfy the exponential off-diagonal decay. The algorithm is given in Algorithm \ref{alg:band}, which we call \RB{}  (Banded Matrix Recovery Algorithm using Matvecs).

\begin{algorithm}[ht]
  \caption{Banded Matrix Recovery Algorithm using Matvecs (\RB)}
  \label{alg:band}
  \begin{algorithmic}[1]
\Require{Matrix-vector product function handle $\texttt{mvp}: \vct{x}\mapsto \mtx{B}\vct{x}$; \newline $s = 2s_0+1$ the number of matrix-vector products; (rec. $s_0 = 10k$ for functions of $k$-banded matrices)}
\Ensure{$\widehat{\mtx{B}}$, $s_0$-banded approximation to $\mtx{B}$;}
\vspace{0.5pc}
\State Compute matrix-vector products $\mtx{B}_s = \texttt{mvp}\left(\mtx{I}_{n}^{(s)}\right) \in \R^{n\times s}$, where $\mtx{I}_n^{(s)}$ is as in \eqref{eq:repeatedid}
\State Set the entries of $\widehat{\mtx{B}}$ using $\mtx{B}_s$ and \eqref{reconeq}
\end{algorithmic}
\end{algorithm}

\subsubsection{Complexity of \RB{} (Alg. \ref{alg:band})}
The complexity of \RB{} is $s$ matrix-vector products with $\mtx{B}$. In the case $\mtx{B} = f(\mtx{A})$ for a $k$-banded matrix $\mtx{A}$, we can evaluate $f(\mtx{A})\mtx{I}_n^{(s)}$ using Krylov method. Assuming that $f$ is analytic in a region containing the eigenvalues of $\mtx{A}$, the polynomial Krylov method converges (to sufficient accuracy) in $\bigO(1)$ iterations and $s = 2ck+1$ for some constant $c$, then the complexity of \RB{} is $\bigO(snk) = \bigO(nk^2)$.

If we only have access to the matrix using its transpose, i.e. $\vct{x} \mapsto \mtx{A}^T\vct{x}$ (or $\vct{x} \mapsto \mtx{B}^T\vct{x}$), then we can take $f(\mtx{A}^T)\mtx{I}_n^{(s)}$ (or $\mtx{B}^T\mtx{I}_n^{(s)}$) and use the rows of $f(\mtx{A}^T)\mtx{I}_n^{(s)}$ (or $\mtx{B}^T\mtx{I}_n^{(s)}$) to recover the columns of $f(\mtx{A})$ (or $\mtx{B}$).

\subsubsection{Error estimate for \RB{}} \label{subsec:errestband}
The qualitative behaviour of \RB{} is better understood than \RS{}. Since $\mtx{B}$ is expected to have an off-diagonal decay, an increase in $s$ would improve the quality of the estimate $\hat{\mtx{B}}$ in \RB{}. In the context of matrix functions, the speed of the off-diagonal decay of $f(\mtx{A})$ is positively correlated to the smoothness of $f$ and negatively correlated to the size of the bandwidth of $\mtx{A}$.
However, unlike \RS{}, \RB{} estimates the entries of $\mtx{B}$ using the entries of the measurements $\mtx{B}_s$. Therefore, by construction, $\norm{\hat{\mtx{B}}\mtx{I}_n^{(s)}-\mtx{B}_s}_2 = 0$ and we cannot use the same strategy as \RS{} in Section \ref{subsec:posterrspamram}. However, we can estimate the error by making a small number of extra Gaussian measurements of $\mtx{B}$. We can estimate the error using
\begin{equation*}
    \delta_{RE}^{(B)}:=\frac{\norm{\hat{\mtx{B}}\mtx{X} - \mathtt{mvp}(\mtx{X})}_2}{\norm{\mathtt{mvp}(\mtx{X})}_2}
\end{equation*} where $\mtx{X}\in \R^{n\times 5}$ is a Gaussian sensing operator. As previously mentioned, making a small constant number of Gaussian measurements, here we take $5$, usually suffices in practice; see \cite{GrattonTitley-Peloquin2018,mt20}. We recommend computing $ \delta_{RE}^{(B)}$ when the decay behaviour of $\mtx{B}$ is unknown or in the context of matrix functions, when the smoothness of $f$ or the bandwidth of the underlying matrix $\mtx{A}$ is unknown. Later in Section \ref{sec:numill}, we illustrate the error estimate $\delta_{RE}^{(B)}$ in a numerical experiment; see Figure \ref{fig:syneg}.

\subsubsection{Analysis of \RB (Alg. \ref{alg:band})} \label{subsec:bandanal}
Here we give theoretical guarantees for \RB{}. The analysis for \RB{} is easier and it gives a stronger bound than \RS{} as the precise location of the dominant entries and the qualitative behaviour of the decay is known. The proof is motivated by Proposition 3.2 in \cite{Benzi2007}.
\begin{theorem}[Algorithm \ref{alg:band}] \label{thm:band}
    Let $\mtx{B}$ be a matrix satisfying
    \begin{equation}
    \left|\mtx{B}_{ij}\right| < C \lambda^{|i-j|} \tag{\ref{eq:symdecayrate}}
    \end{equation} where $C>0$ and $0<\lambda<1$ are constants.
    Then the output of \RB, $\widehat{\mtx{B}}$, satisfies
    \begin{equation*}
        \norm{\widehat{\mtx{B}} - \mtx{B}}_2 \leq \frac{4C\lambda}{1-\lambda} \lambda^{s_0}
    \end{equation*} where $s = 2s_0+1$.
\end{theorem}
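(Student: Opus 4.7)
The plan is to bound $\|E\|_2$ for $E := \widehat{\mtx{B}}-\mtx{B}$ via the standard inequality $\|E\|_2 \leq \sqrt{\|E\|_1\,\|E\|_\infty}$, which reduces the problem to bounding absolute row sums and column sums. Since the entrywise decay bound \eqref{eq:symdecayrate} is symmetric in $i$ and $j$, the same argument controls both, so I would focus on $\|E\|_\infty$.

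I would split the entries of $E$ into two regions. For $(i,j)$ inside the band $|i-j|\leq s_0$, I first verify that the unique $j'\in\{1,\dots,s\}$ with $j' \equiv j\pmod{s}$ satisfies $t_{ij'}$ such that $j'+st_{ij'}=j$; this holds because $s=2s_0+1>2s_0$ forces any nonzero shift to leave the window $\{i-s_0,\dots,i+s_0\}$. Thus by \eqref{reconeq},
$$E_{ij}\;=\;\widehat{B}_{ij}-B_{ij}\;=\sum_{\substack{t\neq 0\\ 1\leq j+st\leq n}} B_{i,\,j+st}.$$
For $(i,j)$ outside the band, $\widehat{B}_{ij}=0$, so $|E_{ij}|=|B_{ij}|\leq C\lambda^{|i-j|}$ directly from \eqref{eq:symdecayrate}.

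The core step is a combinatorial observation. Fix a row $i$ and let $k := j+st-i$. As $j$ ranges over $\{i-s_0,\dots,i+s_0\}$ and $t$ ranges over $\Z\setminus\{0\}$, the resulting shifts $k$ cover each integer with $|k|\geq s_0+1$ \emph{exactly once}, because for fixed $t\neq 0$ they give the interval $[st-s_0,\,st+s_0]$ of length $s$, and these intervals for $t=\pm 1,\pm 2,\dots$ tile $\{|k|\geq s_0+1\}$ without overlap. Using \eqref{eq:symdecayrate} and dropping the range restriction $1\leq j+st\leq n$ (which only removes nonnegative terms), the in-band contribution to the row sum collapses to
$$\sum_{|j-i|\leq s_0}|E_{ij}|\;\leq\sum_{|k|\geq s_0+1}\!C\lambda^{|k|}\;=\;\frac{2C\lambda^{s_0+1}}{1-\lambda}.$$
The out-of-band contribution is bounded identically: $\sum_{|j-i|>s_0}C\lambda^{|i-j|}\leq \frac{2C\lambda^{s_0+1}}{1-\lambda}$. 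Adding gives $\|E\|_\infty\leq \frac{4C\lambda^{s_0+1}}{1-\lambda}$, and the same computation on columns gives the identical bound for $\|E\|_1$. Finally
$$\|E\|_2\;\leq\;\sqrt{\|E\|_1\,\|E\|_\infty}\;\leq\;\frac{4C\lambda^{s_0+1}}{1-\lambda}\;=\;\frac{4C\lambda}{1-\lambda}\,\lambda^{s_0},$$
which is the claimed bound.

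The main obstacle is the combinatorial bookkeeping: establishing that the aliased index set $\{k=j+st-i : |j-i|\leq s_0,\, t\neq 0\}$ covers the complement $\{|k|\geq s_0+1\}$ exactly once, and doing so while correctly handling the fact that $\widehat{B}_{ij}$ is defined only through a particular representative $j'\in\{1,\dots,s\}$. Edge effects near $i=1$ or $i=n$ remove some terms from the in-band sum, but since every term is nonnegative in absolute value, this only sharpens the inequality and requires no separate treatment.
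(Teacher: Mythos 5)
Your proposal is correct and follows essentially the same route as the paper's proof: bound $\norm{\widehat{\mtx{B}}-\mtx{B}}_2$ by $\sqrt{\norm{\cdot}_1\norm{\cdot}_\infty}$, split each row/column sum into the in-band aliasing error (a sum of out-of-band entries, which you justify via the exact tiling of $\{|k|\geq s_0+1\}$ by the shifted windows) plus the out-of-band tail itself, and sum the geometric series to get $\frac{4C\lambda}{1-\lambda}\lambda^{s_0}$. Your explicit tiling argument simply spells out the step the paper states more tersely, so no substantive difference.
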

\begin{proof}
    Define $\mtx{E} = \widehat{\mtx{B}} - \mtx{B}$. We get
    \begin{equation*}
        \norm{\widehat{\mtx{B}} - \mtx{B}}_2 = \norm{\mtx{E}}_2 \leq \sqrt{\norm{\mtx{E}}_1 \norm{\mtx{E}}_\infty}
    \end{equation*} using a special case of H\"older's inequality {\cite[\S 2.3.3]{matcomp}}. We now bound $\norm{\mtx{E}}_1$ and $\norm{\mtx{E}}_\infty$ using \eqref{eq:symdecayrate}. By the construction of $\widehat{\mtx{B}}$, the $i$th row of $\mtx{E}$ satisfies
    \begin{align*}
        \norm{E(i,:)}_1 &\leq  
         \sum_{\ell = -s_0}^{s_0}\left|\widehat{B}_{i,i+\ell}-B_{i,i+\ell}\right| + \sum_{\ell = s_0+1}^{n-i} |B_{i,i+\ell}| + \sum_{\ell = s_0+1}^{i-1} |B_{i,i-\ell}|\\
        &\leq 2\sum_{\ell = s_0+1}^{n-i} |B_{i,i+\ell}| + 2\sum_{\ell = s_0+1}^{i-1} |B_{i,i-\ell}|\\
        &< 2C\sum_{\ell = s_0+1}^{n-i} \lambda^\ell +  2C\sum_{\ell = s_0+1}^{i-1} \lambda^\ell
        \\ &< 4C \lambda^{s_0+1} \sum_{\ell = 0}^{\infty} \lambda^{\ell} = \frac{4C\lambda}{1-\lambda} \lambda^{s_0}.
    \end{align*} Similarly, the $j$th column of $\mtx{E}$ satisfies
    \begin{equation*}
         \norm{E(:,j)}_1 < \frac{4C\lambda}{1-\lambda} \lambda^{s_0}.
    \end{equation*}
    Therefore
    \begin{equation*}
        \norm{\mtx{E}}_1 = \max_{1\leq j\leq n} \norm{E(:,j)}_1 < \frac{4C\lambda}{1-\lambda} \lambda^{s_0},\qquad 
        \norm{\mtx{E}}_\infty = \max_{1\leq i \leq n} \norm{E(i,:)}_1 <  \frac{4C\lambda}{1-\lambda} \lambda^{s_0}.
    \end{equation*}
    We conclude that 
    \begin{equation*}
         \norm{\widehat{\mtx{B}} - \mtx{B}}_2 \leq \frac{4C\lambda}{1-\lambda} \lambda^{s_0}.
    \end{equation*}
\end{proof}
\begin{remark} \*
\begin{enumerate} 
    \item When $\mtx{B} = f(\mtx{A})$ for a $k$-banded matrix $\mtx{A}$, we can use $s = 2ck+1$ for some integer constant $c>0$. This choice of $s$ is motivated by the fact that if $f$ is analytic then $f(\mtx{A})$ can be well approximated by a low degree polynomial $p(\mtx{A})$ which has bandwidth deg$(p)k$.
    It is also useful to note the dependence of the decay factor $\lambda$ on the bandwidth. If the entries of a $1$-banded matrix decay away from the main diagonal at rate $\lambda$, then for a $k$-banded matrix with comparable spectral properties, the decay rate becomes $\lambda^{1/k}$ \cite{DMS1984}. This behaviour aligns with the graph distance $d(i,j)$ for general sparse matrices discussed in Section~\ref{sec:sparse}, and the choice $s = 2ck+1$.
    \item In Theorem \ref{thm:band}, if $\lambda \leq 0.9$ then for $\epsilon$ accuracy it suffices to take
    \begin{equation*}
        s_0\geq \frac{\log{(36C)}-\log{\epsilon}}{\log{(\lambda^{-1})}}.
    \end{equation*}
    \item
    In many cases, the constants $C,\lambda$ and $s_0$ are either unknown, implicit or too expensive to compute in applications. For matrix functions, Theorem \ref{thm:band} relies on the off-diagonal decay bound in \cite{Benzi2007}, which contains constants that are implicit as they depend on the function $f$ and the spectral property of the matrix $\mtx{A}$. Therefore, without decay bounds that include explicit constants, Theorem \ref{thm:band} cannot serve as a priori estimate for Algorithm \ref{alg:band}. However, the analysis in Theorem \ref{thm:band} can still serve as a heuristic for when Algorithm \ref{alg:band} will be useful.
\end{enumerate}
\end{remark}

\subsubsection{Different decay rates} \label{sec:difdecayrate}
A similar analysis can be performed if the decay rate is different on each side of the diagonal, for example, if the matrix satisfies \eqref{eq:ubandbound} and \eqref{eq:lbandbound}. The argument is the same with a different definition for $t_{ij}$, which is
\begin{equation} \label{eq:diffdecayt}
    t_{ij} = \argmin\limits_{\substack{1\leq j+st \leq n \\ t\in \Z}}\left|j+st-i-\frac{s-1}{2}\frac{\log(\lambda_1^{-1})-\log(\lambda_2^{-1})}{\log(\lambda_1^{-1})+\log(\lambda_2^{-1})}\right|
\end{equation} where $\lambda_1$ is the decay rate above the diagonal and $\lambda_2$ is that below the diagonal. This expression for $t_{ij}$ accounts for the different decay rates and coincides with our analysis in the previous section when $\lambda_1 = \lambda_2$. The algorithm will be the same as \RB{} with $t_{ij}$ defined as \eqref{eq:diffdecayt}. For $s = \flr{c/\log(\lambda_1^{-1})}+\flr{c/\log(\lambda_2^{-1})}+1$ for some constant $c>0$, the algorithm will output $\widehat{\mtx{B}}$ with upper bandwidth $\flr{c/\log(\lambda_1^{-1})}$ and lower bandwidth $\flr{c/\log(\lambda_2^{-1})}$ and $\widehat{\mtx{B}}$ will satisfy
\begin{equation*}
    \norm{\widehat{\mtx{B}}-\mtx{B}}_2 \lesssim e^{-c}
\end{equation*} which can easily be shown by following the proof of Theorem \ref{thm:band}.

\subsection{Extensions to Kronecker Sum}
Some of the results for banded matrices and matrices with exponential off-diagonal decay can be naturally extended to Kronecker sums, which arises, for example in the finite difference discretization of the two-dimensional Laplace operator \cite{LeVequeBook}. In particular, decay bounds for functions of Hermitian matrices with Kronecker structure have been studied in \cite{BenziSimoncini2015}. Let $\mtx{A}^{(1)}, \mtx{A}^{(2)} \in \R^{n\times n}$. The Kronecker sum $\mathcal{A} \in \R^{n^2\times n^2}$ of $\mtx{A}^{(1)}$ and $\mtx{A}^{(2)}$ is defined by
\begin{equation*}
    \mathcal{A} = \mtx{A}^{(1)} \oplus \mtx{A}^{(2)} = \mtx{A}^{(1)} \otimes \mtx{I}_n + \mtx{I}_n \otimes \mtx{A}^{(2)}
\end{equation*} where $\otimes$ is the Kronecker product, defined by 
\begin{equation*}
    \mtx{A}\otimes \mtx{B} = \begin{bmatrix}
        A_{11} \mtx{B} & A_{12} \mtx{B} & ... & A_{1n}\mtx{B} \\
        A_{21} \mtx{B} & A_{22} \mtx{B} & ... & A_{2n}\mtx{B} \\
        \vdots & \vdots & \ddots & \vdots \\
        A_{n1} \mtx{B} & A_{n2} \mtx{B} & ... & A_{nn}\mtx{B}
    \end{bmatrix} \in \R^{n^2\times n^2}.
\end{equation*} 

Suppose $\mtx{A}^{(1)}$ and $\mtx{A}^{(2)}$ are both banded matrices. Then $\mtx{I}_n \otimes \mtx{A}^{(2)} \in \R^{n^2\times n^2}$ is a banded matrix with the same bandwidth as $\mtx{A}^{(2)}$ and $\mtx{A}^{(1)} \otimes \mtx{I}_n \in \R^{n^2 \times n^2}$ is permutation equivalent to a banded matrix with the same bandwidth as $\mtx{A}^{(1)}$ because $\mtx{P}(\mtx{A}^{(1)} \otimes \mtx{I}_n)\mtx{P}^T = \mtx{I}_n \otimes \mtx{A}^{(1)}$ where $\mtx{P}$ is the perfect shuffle matrix \cite{VanLoan2000}. With this observation, we can use the recovery technique used for banded matrices in Section \ref{sec:bandrecovery} to recover $\mathcal{A}$ for banded matrices $\mtx{A}^{(1)}$ and $\mtx{A}^{(2)}$. Suppose that $\mtx{A}^{(1)}$ and $\mtx{A}^{(2)}$ have bandwidth $k^{(1)}$ and $k^{(2)}$ respectively. Notice that to recover $\mathcal{A}$, we need to recover the off-diagonal entries of $\mtx{A}^{(1)}$ and $\mtx{A}^{(2)}$ to recover the off-diagonal entries of $\mathcal{A}$ and recover $A_{ii}^{(1)}+A_{jj}^{(2)}$ for all $i,j$ to recover the diagonal entries of $\mathcal{A}$. We use the following $(2+2k^{(1)}+2k^{(2)})$ matrix-vector products
\begin{equation*}
    \left[\mtx{P}^T\begin{bmatrix}
        \mtx{I}_n^{\left(1+2k^{(1)}\right)} \\ \mtx{0}
    \end{bmatrix}, \begin{bmatrix}
        \mtx{I}_n^{\left(1+2k^{(2)}\right)} \\ \mtx{0}
    \end{bmatrix} \right] \in \R^{n^2 \times \left(2+2k^{(1)}+2k^{(2)}\right)}
\end{equation*} to recover $\mathcal{A}$. Since the top left $n\times n$ block of $\mathcal{A}$ equals $A_{11}^{(1)}\mtx{I}_n+\mtx{A}^{(2)}$, which is $k^{(2)}-$banded, the first $n$ rows of the matrix-vector product with $\begin{bmatrix}
        \mtx{I}_n^{\left(1+2k^{(2)}\right)} \\ \mtx{0}
    \end{bmatrix}$ recover $A_{11}^{(1)}\mtx{I}_n+\mtx{A}^{(2)}$ exactly (Proposition \ref{thm:bband}), which in turn recovers all the off-diagonal entries of $\mtx{A}^{(2)}$. Similarly, $\mtx{P}^T\begin{bmatrix}
        \mtx{I}_n^{\left(1+2k^{(1)}\right)}\\ \mtx{0}
    \end{bmatrix}$ recovers $\mtx{A}^{(1)}+A_{11}^{(2)}\mtx{I}_n$ exactly using $\mtx{P}(\mtx{A}^{(1)} \otimes \mtx{I}_n)\mtx{P}^T = \mtx{I}_n \otimes \mtx{A}^{(1)}$. Therefore we get all the off-diagonal entries of $\mathcal{A}$ from recovering $A_{11}^{(1)}\mtx{I}_n+\mtx{A}^{(2)}$ and $\mtx{A}^{(1)}+A_{11}^{(2)}\mtx{I}_n$. The diagonal entries of $\mathcal{A}$ can be recovered from the diagonal entries of $A_{11}^{(1)}\mtx{I}_n+\mtx{A}^{(2)}$ and $\mtx{A}^{(1)}+A_{11}^{(2)}\mtx{I}_n$ by noting
    \begin{equation*}
        A_{ii}^{(1)}+A_{jj}^{(2)} = (A_{ii}^{(1)} + A_{11}^{(2)}) + (A_{11}^{(1)} + A_{jj}^{(2)}) - (A_{11}^{(1)} + A_{11}^{(2)})
    \end{equation*} for all $i,j$.

For matrices with rapid off-diagonal decay, we can extend the above idea in certain cases. Focusing on the matrix exponential, we have a special relation {\cite[\S 10]{matfunchigham}}
\begin{equation*}
    \exp(\mtx{A}^{(1)} \oplus \mtx{A}^{(2)}) = \exp(\mtx{A}^{(1)}) \otimes \exp(\mtx{A}^{(2)}).
\end{equation*} When $\mtx{A}^{(1)}$ and $\mtx{A}^{(2)}$ are both banded matrices we can use the same procedure above by applying matrix-vector products with
\begin{equation*}
    \left[\mtx{P}^T\begin{bmatrix}
        \mtx{I}_n^{(s_1)} \\ \mtx{0}
    \end{bmatrix}, \begin{bmatrix}
        \mtx{I}_n^{(s_2)} \\ \mtx{0}
    \end{bmatrix} \right] \in \R^{n^2 \times (s_1+s_2)}.
\end{equation*} for sufficiently large $s_1$ and $s_2$ that captures the dominant entries of $\exp(\mtx{A}^{(1)})$ and $\exp(\mtx{A}^{(2)})$ near the main diagonal respectively. This extension is natural and analogous to the extension from recovering banded matrices to recovering matrices with exponential off-diagonal decay earlier this section.

The strategy for recovering banded matrices and approximating matrices with rapid off-diagonal decay can also be used as laid out above in a similar setting involving Kronecker product or sum. Examples include $f(\mtx{I} \otimes \mtx{A}) = \mtx{I} \otimes f(\mtx{A})$ and the formulae for the cosine and the sine of the Kronecker sum between two matrices {\cite[\S 12]{matfunchigham}}.


\section{Numerical Illustrations} \label{sec:numill}
Here we illustrate \RB{} and \RS{} using numerical experiments. In all experiments, we use $20$ iterations of a polynomial Krylov method and $50$ sample points for the contour integration from method 2 of \cite{matveccontour} to evaluate the matrix-vector products with $f(\mtx{A})$. The contour integration requires the knowledge of the largest and smallest eigenvalues of $\mtx{A}$. We assume that these eigenvalues are known a priori. In all of the plots, the $x$-axis denotes the number of matrix-vector products $s$, i.e. $\mtx{x}\mapsto f(\mtx{A})\mtx{x}$. Note that this refers to the matrix-vector products with $f(\mtx{A})$, not with $\mtx{A}$. If computing  $\vct{x}\mapsto f(\mtx{A})\vct{x}$ requires $m$ matrix-vector products with $\mtx{A}$, then approximating $f(\mtx{A})$ in our experiments requires a total of $ms$ matrix-vector products with $\mtx{A}$. The $y$-axis denotes the relative error in the $2$-norm with respect to $f(\mtx{A})$ obtained by dense arithmetic in MATLAB, i.e., \texttt{expm}, \texttt{sqrtm} and \texttt{logm}. Here, we plot the relative error $\|f(\mtx{A})-\widehat{f(\mtx{A}})\|_2/\|f(\mtx{A})\|_2$ instead of the absolute error $\|f(\mtx{A})-\widehat{f(\mtx{A}})\|_2$ to provide a clearer, scale-invariant view of our numerical results. For \RS, the sparsity parameter $k$ is chosen to be $8$ times smaller than the number of matrix-vector products, i.e., $s = 8k$ and the sensing operator $\mtx{Y}$ is Gaussian. The experiments were conducted in MATLAB version 2021a using double precision arithmetic.

\subsection{Synthetic Examples} \label{subsec:num_syneg}
We illustrate synthetic examples for \RB{} and \RS{} using the functions $e^x, \sqrt{1+x}$ and $\log(1+x)$. We consider a synthetic banded matrix and a synthetic sparse matrix given below.
\begin{itemize}
    \item \emph{Banded case:} symmetric $2$-banded matrix $\mtx{A}_B \in \R^{1024\times 1024}$ with its nonzero entries drawn i.i.d. from $\mathcal{N}(0,1)$. $\mtx{A}_B$ has been rescaled to $\norm{\mtx{A}_B}_2 = 0.5$.
    \item \emph{General sparse case:} symmetric sparse matrix $\mtx{A}_S\in \R^{1024\times 1024}$ created using the MATLAB command \texttt{sprandsym} with density $1/1024$. $\mtx{A}_S$ has been rescaled to $\norm{\mtx{A}_S}_2 = 0.5$.
\end{itemize} Symmetry was enforced to ensure that the eigenvalues are real, and $\mtx{A}_B$ and $\mtx{A}_S$ have been rescaled to ensure that their eigenvalues are sufficiently away from $-1$, which is where $\sqrt{1+x}$ and $\log(1+x)$ have singularities. This was done to ensure that the matrix-vector products $f(\mtx{A})\mtx{b}$ can be computed with high accuracy with 20 Krylov steps.

Figure \ref{fig:syneg} shows the accuracy as well as how well the error estimates $\delta_{RE}^{(S)}$ and $\delta_{RE}^{(B)}$ describe the behaviour of the two algorithms. For the banded case in Figure \ref{fig2a}, we see that the error decays exponentially until the error is dominated by either the machine precision or the error from computing the matrix-vector products. This exponential decay rate is consistent with Theorem \ref{thm:band}. The dotted lines indicate the error estimate $\delta_{RE}^{(B)}$. We observe that $\delta_{RE}^{(B)}$, which makes $5$ extra Gaussian measurements, estimate the error well. Therefore, we recommend computing $\delta_{RE}^{(B)}$ in practice, especially in cases when either the bandwidth of the underlying matrix or the matrix function is unknown. 

For the general sparse case in Figure \ref{fig2b}, the error decays quickly with the number of nonzero entries recovered in each row. The sparsity pattern influences the decay rate of \RS, making the behaviour of the decay rate more unpredictable and irregular. This has been shown partially in Theorem \ref{thm:gen} where the bound depends on the sparsity pattern or more specifically the cardinality of the union of the combined support locations of $\mtx{I}_n,\mtx{A},...,\mtx{A}^d$ in each row for $d$ large enough. Although the behavior of \RS{} is irregular, $\delta_{RE}^{(S)}$ serves as an excellent error estimate for how \RS{} performs.

Finally, we note that there are regimes in which the proposed method does not convincingly amortize its computational cost. For example, in Figure \ref{fig2b} for the $1024 \times 1024$ problem, the \RS requires $55$ matrix–vector products with $f(\mtx{A})$—corresponding to $55 \times 19 = 1045$ matrix–vector products with $\mtx{A}$—to achieve only $10^{-3}$ accuracy. In contrast, naive dense methods that explicitly form the matrix and compute a full eigenvalue decomposition achieve full accuracy using fewer matrix–vector products with $\mtx{A}$. This highlights that the advantages of the proposed approach are most pronounced in large-scale settings where forming the matrix is infeasible, rather than in moderate-size problems where dense methods remain competitive. See Section~\ref{subsec:scaletest} for experiments at larger scale.

\begin{figure}[!ht]
\subfloat[]{\label{fig2a}\includegraphics[scale = 0.45]{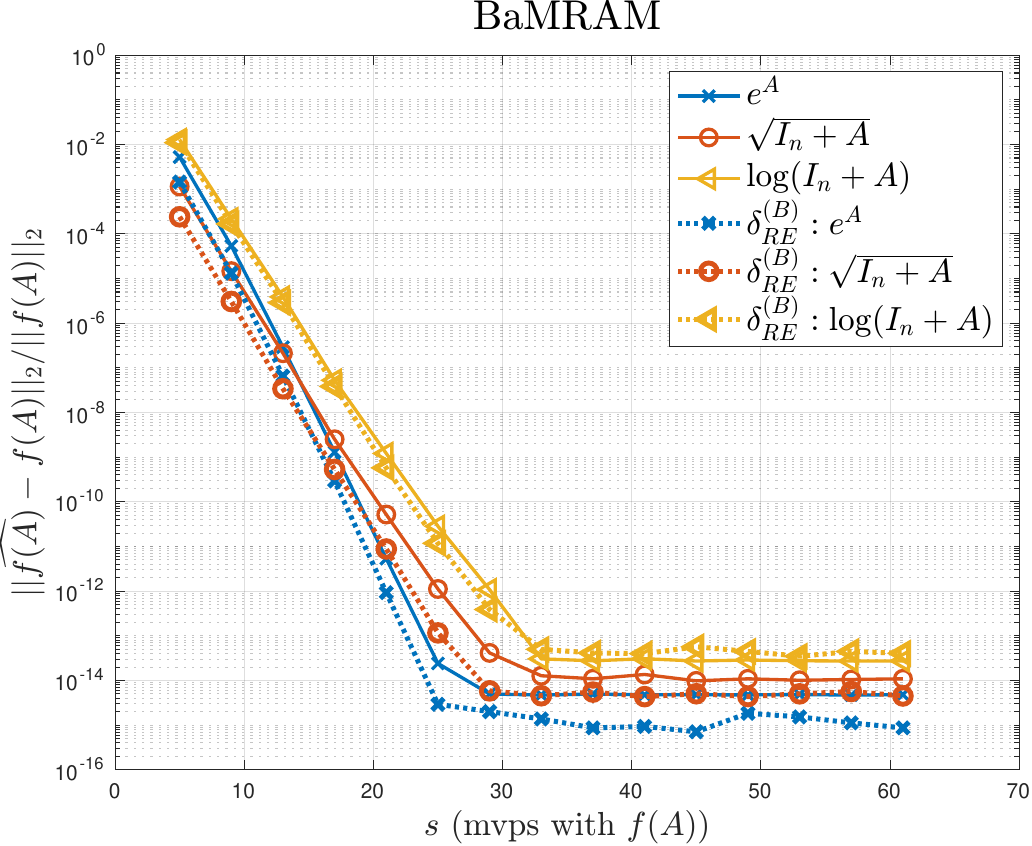}} \hfill
\subfloat[]{\label{fig2b}\includegraphics[scale = 0.45]{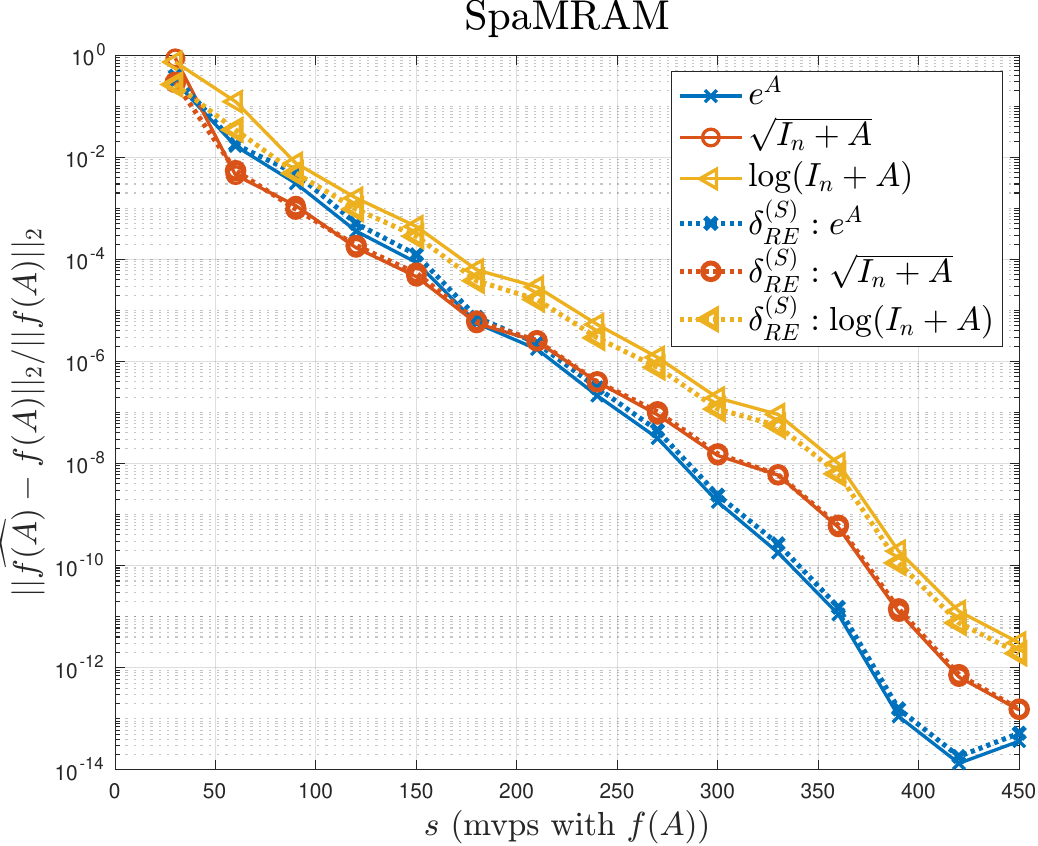}}
\centering 
\caption{The accuracy of \RB{} and \RS{} using two $1024\times 1024$ synthetic symmetric matrices: a random $2$-banded matrix (left) and a random sparse matrix with nonzero density $1/1024$ (right). The dotted lines are the error estimates $\delta_{RE}^{(S)}$ and $\delta_{RE}^{(B)}$ discussed in Sections \ref{subsec:posterrspamram} and \ref{subsec:errestband} respectively. The error estimates can be used to determine the behaviour of \RS{} and \RB{}.}
\label{fig:syneg}
\end{figure}

\subsection{Toy Examples}
In this section, we manufacture a matrix $\mtx{A}$ such that $f(\mtx{A})$ has a sparse structure and use our algorithms to approximate $f(\mtx{A})$ using matrix-vector products with $\mtx{A}$ only. We explore two functions, namely $\sqrt{\mtx{A}}$ and $\log{(\mtx{A})}$. For the square root function, we set $\sqrt{\mtx{A}}$ equal to a sparse positive definite matrix and use our algorithms 
to recover $\sqrt{\mtx{A}}$ via matrix-vector products with $\mtx{A}=\left(\sqrt{\mtx{A}}\right)^2$ (which here is not very sparse). We follow a similar procedure for the matrix logarithm using the formula $\mbox{exp}(\log(\mtx{A}))=\mtx{A}$. 
The aim of this experiment is to show that if $f(\mtx{A})$ is sparse then we can efficiently recover $f(\mtx{A})$ to high accuracy using our algorithms. We choose a sparse positive definite matrix and a banded positive definite matrix from the SuiteSparse Matrix Collection \cite{FloridaDataset2011}.
\begin{itemize}
    \item $\mathtt{gr\_ 30 \_ 30}$: $900\times 900$ $31$-banded symmetric positive definite matrix with $7744$ nonzero entries. This matrix has eigenvalues with $\lambda(A)\in [0.06,11.96]$.
    \item $\mathtt{Trefethen\_ 700}$: $700\times 700$ sparse symmetric positive definite matrix with $12654$ nonzero entries and at most $19$ nonzero entries per row and column. This matrix has eigenvalues with $\lambda(A)\in [1.12,5280]$.
\end{itemize}

In Figure \ref{fig:toyeg}, after an initial period of low accuracy--caused by the unrecovered entries of both matrices being $\bigO(1)$--we obtain high accuracy as the number of nonzero entries recovered per row increases sufficiently for $f(\mtx{A})$, which is either sparse or banded, to be recovered exactly.
In Figure \ref{fig3a}, the matrix $\mathtt{gr\_30\_30}$ is recovered to high accuracy using about $66$ matrix-vector products, which is consistent with the fact that $\mathtt{gr\_30\_30}$ is $31$-banded, and hence has at most $63$ nonzero entries per row and column. This corresponds to a total of $66\times 19 = 1254$ matrix-vector products with $\mtx{A}$.
In Figure \ref{fig3b}, the matrix $\mathtt{Trefethen\_700}$ has been recovered to high accuracy using about $180$ matrix-vector products. This is fairly consistent with the fact that with $s= 180$ matrix-vector products, we recover about $22$ nonzero entries in each row and $\mathtt{Trefethen\_700}$ has at most $19$ nonzero entries per row. This corresponds to $180\times 19 = 3420$ matrix-vector products with $\mtx{A}$.

\begin{figure}[!ht]
\subfloat[]{\label{fig3a}\includegraphics[scale = 0.45]{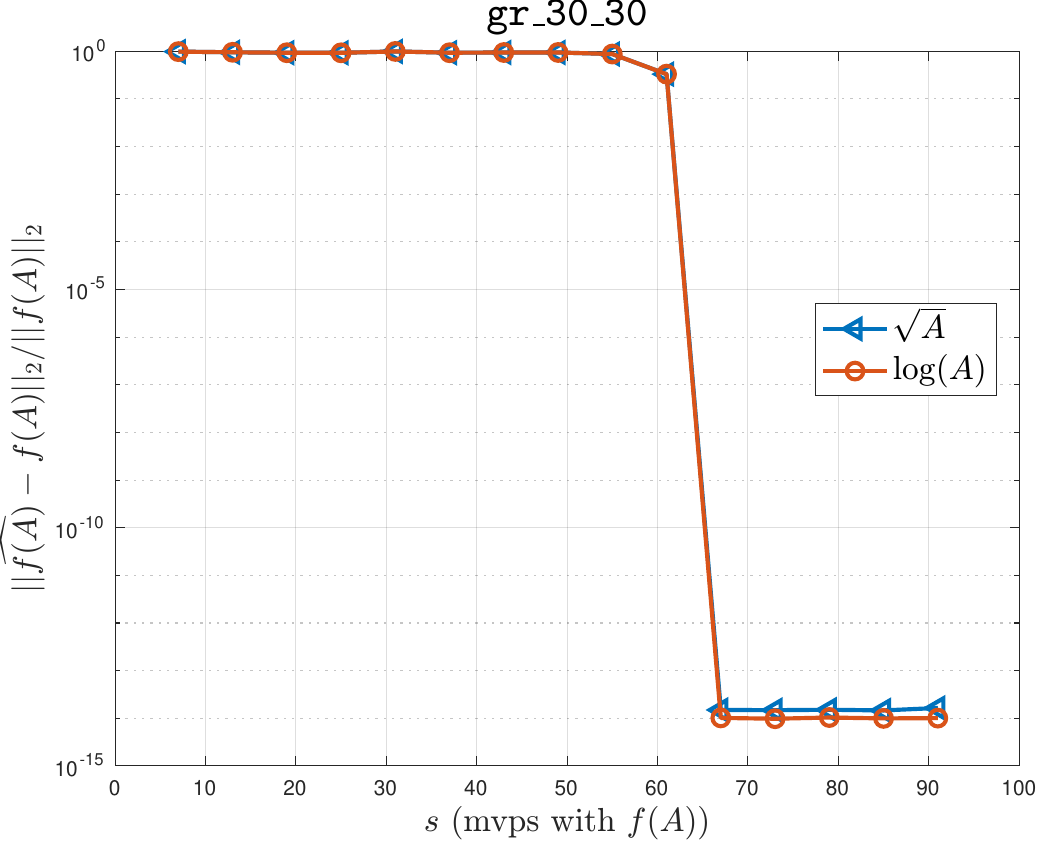}} \hfill
\subfloat[]{\label{fig3b}\includegraphics[scale = 0.45]{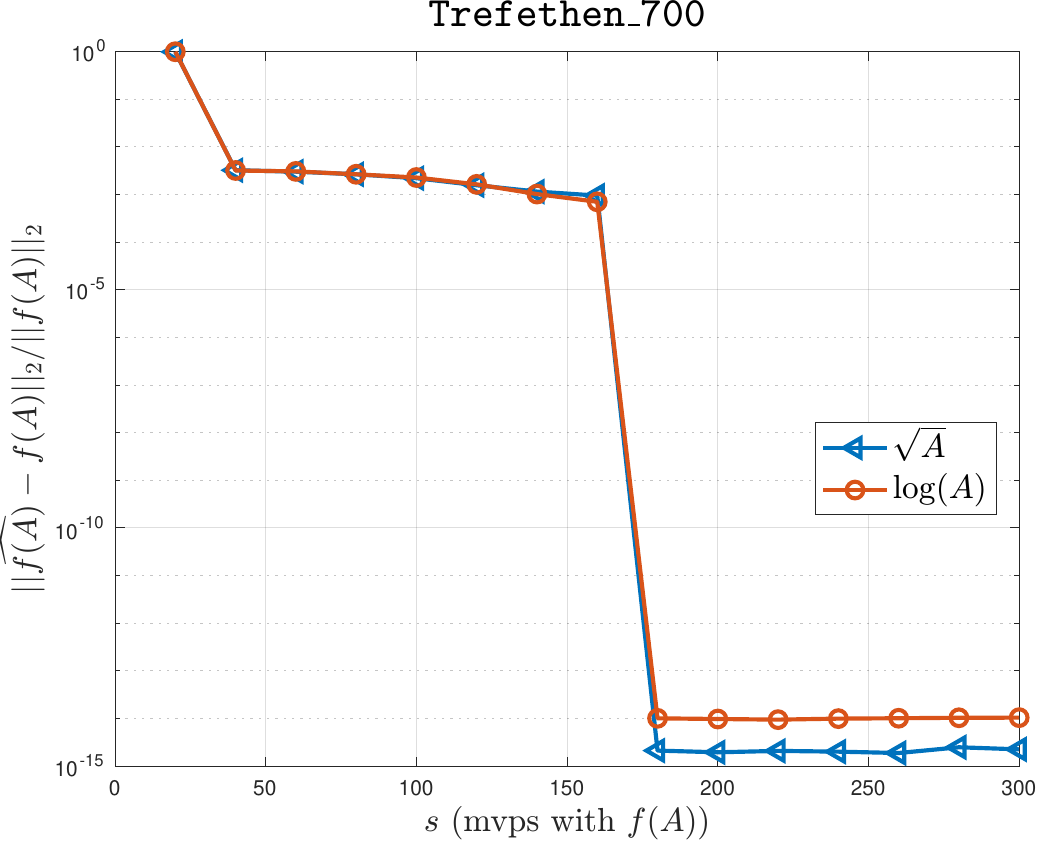}} 
\centering 
\caption{The accuracy of \RB{} and \RS{} for functions of matrices when $f(\mtx{A})$ itself is banded or sparse.}
\label{fig:toyeg}
\end{figure}

\subsection{Sparse dataset examples}
We now illustrate \RB{} and \RS{} using the SuiteSparse Matrix Collection \cite{FloridaDataset2011}, using the four matrices below. The first three matrices are sparse and the last matrix is banded.
\begin{itemize}
    \item $\mathtt{west1505}$: $1505\times 1505$ sparse matrix with $5414$ nonzero entries and at most $12$ nonzero entries per row and at most $27$ nonzero entries per column. This matrix has complex eigenvalues with $|\lambda(A)|\in \left[1.41\cdot 10^{-4},2.29\cdot 10^4\right]$.
    \item $\mathtt{CSphd}$: $1882\times 1882$ sparse matrix with $1740$ nonzero entries and at most $45$ nonzero entries per row and at most $3$ nonzero entries per column. This matrix has all of its eigenvalues equal to zero.
    \item $\mathtt{tols2000}$: $2000\times 2000$ sparse matrix with $5184$ nonzero entries and at most $90$ nonzero entries per row and at most $22$ nonzero entries per column. This matrix has complex eigenvalues with $|\lambda(A)|\in \left[11.79,2442\right]$.
    \item $\mathtt{mhd3200b}$: $3200\times 3200$ $18$-banded symmetric positive definite matrix with $18316$ nonzero entries. This matrix has eigenvalues with $\lambda(A)\in \left[1.37\cdot 10^{-13},2.2\right]$.
\end{itemize}

\begin{figure}[!ht]
\subfloat[$\mathtt{west1505}$]{\label{eig_west1505}\includegraphics[scale = 0.55]{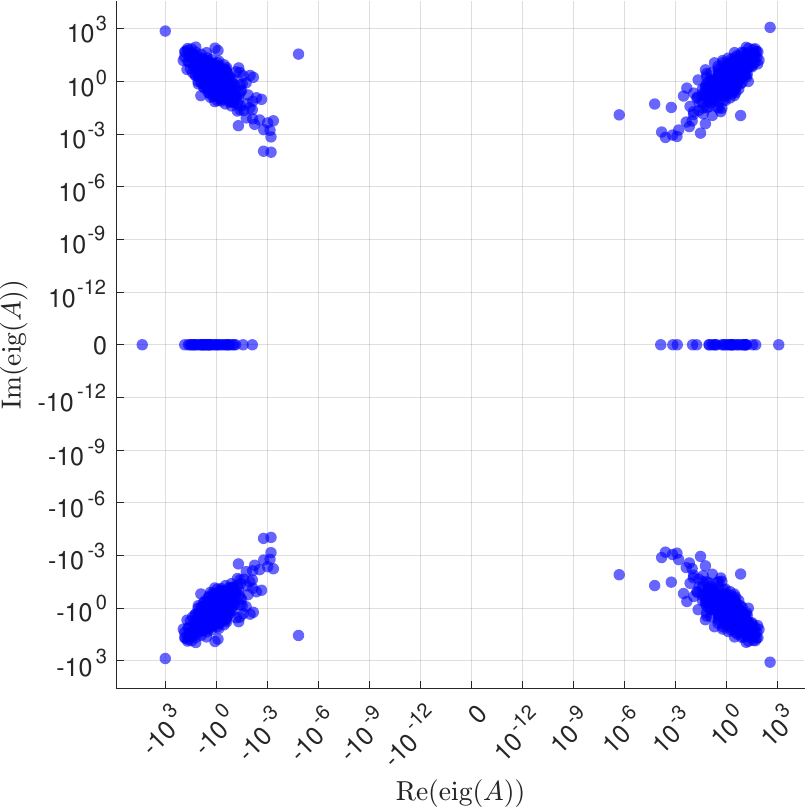}} \hfill
\subfloat[$\mathtt{tols2000}$]{\label{eig_tols2000}\includegraphics[scale = 0.55]{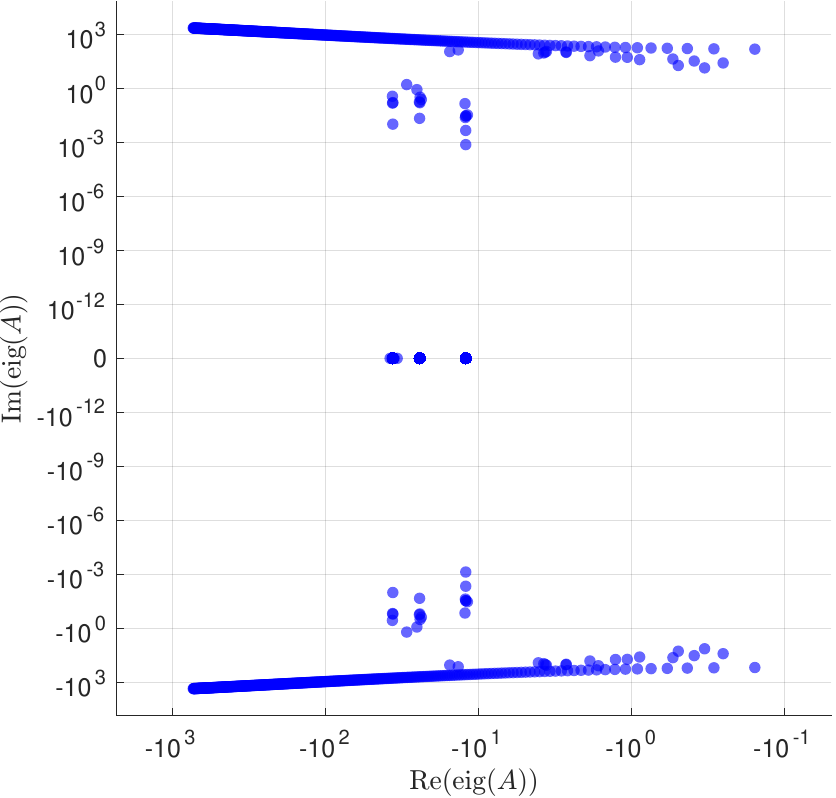}}
\centering 
\caption{Eigenvalue spectrum of $\mathtt{west1505}$ and $\mathtt{tols2000}$}
\label{fig:eigplot}
\end{figure}

The eigenvalue spectrum of $\mathtt{west1505}$ and $\mathtt{tols2000}$ are depicted in Figure \ref{fig:eigplot}. For the sparse matrices $\mathtt{west1505}$, $\mathtt{CSphd}$ and $\mathtt{tols2000}$, we use \RS{} to recover their matrix exponential. For the banded matrix $\mathtt{mhd3200b}$, we use \RB{} to recover its matrix exponential, matrix square root and matrix logarithm. The results are illustrated in Figure \ref{fig:dataseteg}.

\begin{figure}[!ht]
\subfloat[]{\label{fig4a}\includegraphics[scale = 0.45]{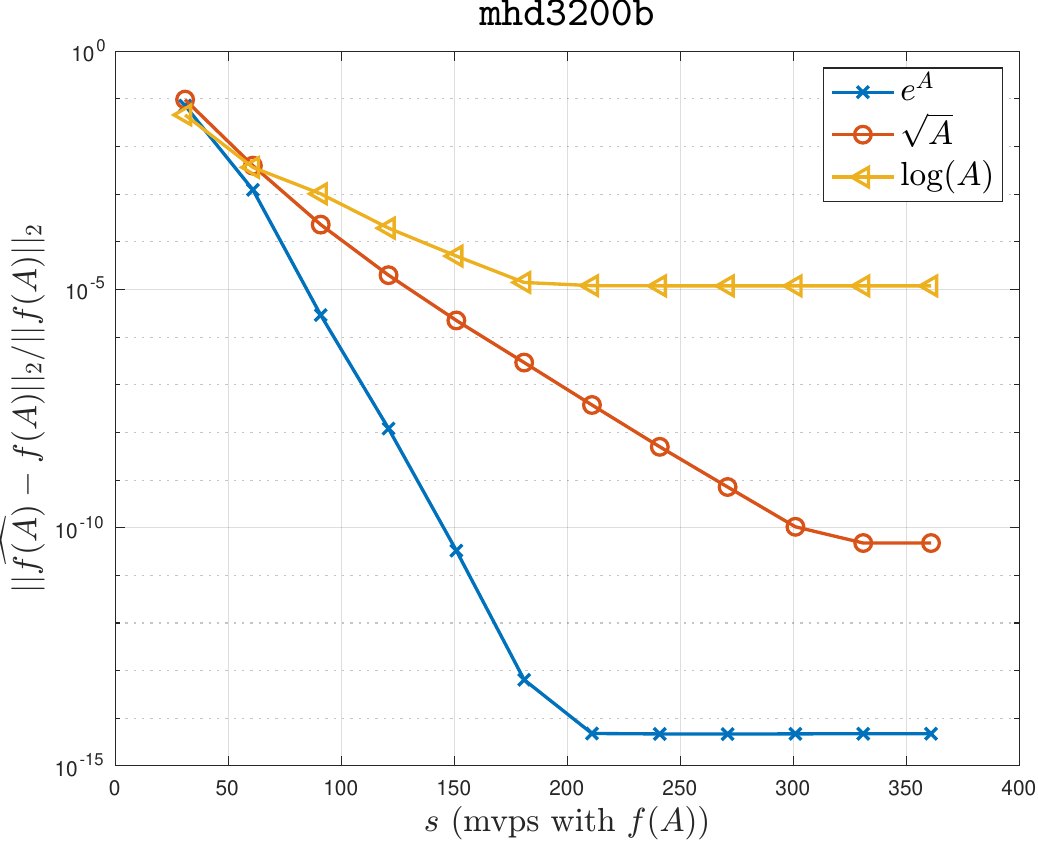}} \hfill
\subfloat[]{\label{fig4b}\includegraphics[scale = 0.45]{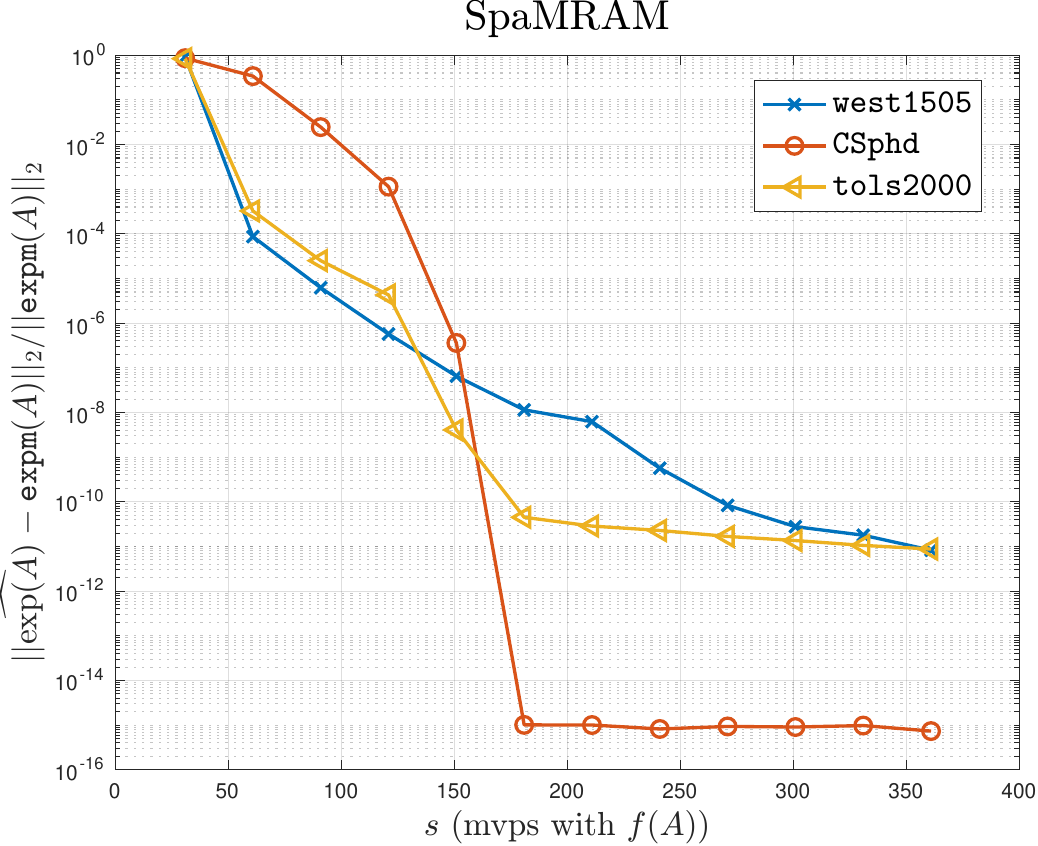}} 
\centering 
\caption{The accuracy of \RB{} 
(left)
and \RS{} (right) using sparse matrices from \cite{FloridaDataset2011}.}
\label{fig:dataseteg}
\end{figure}

In Figure \ref{fig:dataseteg}, we observe that our algorithms are able to approximate $f(\mtx{A})$ with good accuracy. In Figure \ref{fig4a}, we notice that the error decreases exponentially until the error stagnates at some point. The stagnation is due to the error incurred in the computation of matrix-vector products $f(\mtx{A})\vct{b}$, and can be improved by taking more than 20 Krylov steps. 
For banded matrices, we consistently obtain exponential convergence rate whose convergence rate is dictated by the smoothness of $f$. In Figure \ref{fig4b}, we note that the error decreases at a rapid speed until it stagnates due to machine precision for $\mathtt{CSphd}$ or slows down due to the sparsity pattern in the case of $\mathtt{west1505}$ and $\mathtt{tols2000}$. As observed in prior experiments, the behavior is more irregular when the matrix has a more general sparsity pattern.

The approximation to $f(\mtx{A})$ in the above experiments can be highly accurate when the bandwidth of the matrix is small or the matrix has a sparsity pattern that makes $f(\mtx{A})$ approximately sparse, in the sense that only few entries in each row/column are large in magnitude. Our algorithms are particularly useful when $\mtx{A}$ can only be accessed through matrix-vector products and we require an approximation to the entire matrix function. An example is in network analysis where we are given an adjacency matrix $\mtx{A}$ and the off-diagonal entries of $e^{\mtx{A}}$ describe the communicability between two vertices and the diagonal entries of $e^{\mtx{A}}$ describe the subgraph centrality of a vertex \cite{BenziEstradaKlymko2013,EstradaHatano2008,EstradaHigham2010}. The sparse matrix $\mathtt{CSphd}$ in the above experiment is an adjacency matrix for some underlying graph, and with about $180$ matrix-vector products we can approximate its matrix exponential to $10^{-15}$ accuracy. This corresponds to a total of $180\times 19 = 3420$ matrix-vector products with $\mtx{A}$. This means that our algorithm can approximate the communicability between any two vertices and the subgraph centrality of any vertex of $\mathtt{CSphd}$ with $10^{-15}$ accuracy.

\subsection{Scalability test} \label{subsec:scaletest}
This section investigate the asymptotic scalability of \RB{} and \RS{} on matrices larger than those considered in the previous sections. We consider the following clases of synthetic matrices, similar to those in Section~\ref{subsec:num_syneg}:
\begin{itemize}
    \item \emph{Banded case:} $n \times n$ symmetric $2$-banded matrix with its nonzero entries drawn i.i.d. from $\mathcal{N}(0,1)$. The matrix is rescaled so that its $2$-norm equals $0.5$.
    \item \emph{Sparse case:} $n\times n$ symmetric sparse matrix created using the MATLAB command \texttt{sprandsym} with density $\frac{1}{2n}$. This matrix is also rescaled to have $2$-norm equal to $0.5$.
\end{itemize}
We vary the matrix dimension $n$ from $400$ to $25600$ and record the number of matrix-vector products with $\mtx{A}$ required for the methods to meet a prescribed tolerance. The dotted line in the figures corresponds to the number of matrix-vector products with $\mtx{A}$ equal to $n$, which is the number required to recover a general dense matrix. Throughout, we use the matrix exponential, where the action $\vct{x} \mapsto f(\mtx{A})\vct{x}$ is approximated using $10$ Krylov iterations. All other parameters are identical to those used in the previous experiments.

\begin{figure}[!ht]
\subfloat[\RB{}]{\label{fig5a}\includegraphics[scale = 0.45]{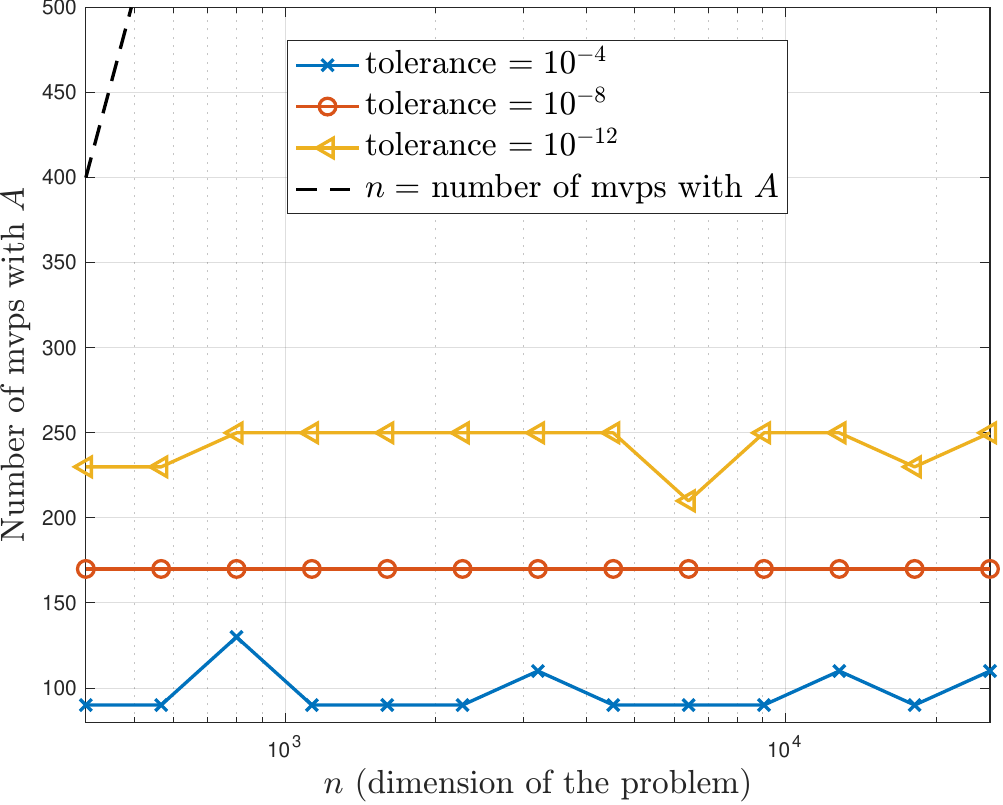}} \hfill
\subfloat[\RS{}]{\label{fig5b}\includegraphics[scale = 0.45]{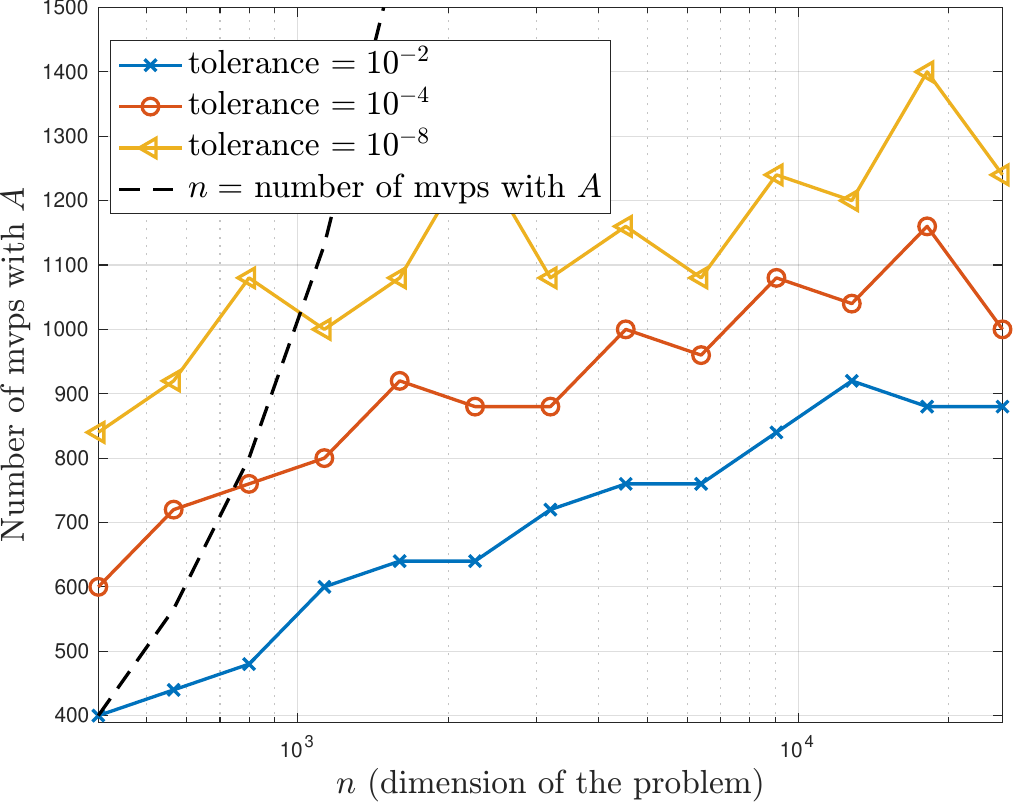}} 
\centering 
\caption{Scalability of \RB{} (left) and \RS{} (right) for matrices with roughly $\bigO(1)$ nonzeros per row and column.}
\label{fig:scaletest}
\end{figure}

The results are shown in Figure~\ref{fig:scaletest}. Both matrix families have $\bigO(1)$ nonzero entries per row and column, which suggests that the corresponding matrix exponential may contain only a small number of dominant entries.

In the banded case (Figure~\ref{fig5a}), we observe that the number of matrix-vector products with $\mtx{A}$ required by \RB{} is essentially independent of the matrix dimension for all tolerances considered. This suggests that, for matrices with bounded bandwidth, \RB{} can effectively recover the dominant entries of $f(\mtx{A})$ using a number of matrix-vector products with $\mtx{A}$ that does not grow with $n$.

For the sparse case, \RS{} exhibits a more irregular behavior as the matrix dimension increases, which can be attributed to the unknown sparsity pattern of the original matrix $\mtx{A}$. Nevertheless, the number of matrix-vector products grows slowly with $n$ and remains below the dotted line for large $n$ and moderate tolerances. Although the curves exhibit mild fluctuations, the overall scaling is sublinear for this test matrix, indicating that the \RS{} effectively exploits the localized structure of the underlying matrix function. In both cases, tighter tolerances lead to an increased number of matrix-vector products, while still maintaining a favorable complexity compared to recovering a general dense matrix.

\section{Discussion and extensions} \label{sec:disc}
In this paper, we presented two algorithms for approximating functions of sparse matrices and matrices with similar structures using matrix-vector products only. We exploited the decay bound from \cite{Benzi2007} to recover the entries of $f(\mtx{A})$ that are large in magnitude. The task of approximating functions of matrices is not the only application for \RS{} and \RB{}. These two algorithms can be used more generally in applications where we are able to perform matrix-vector products with the matrix in question and the matrix we want to approximate satisfies a decay bound similar to \eqref{eq:sparsedecay} or \eqref{eq:symdecayrate}. An example is the decay bound for spectral projectors \cite{BenziBoitoRazouk2013}, computed, for example by a discretized contour integral. Therefore the algorithms can be extended naturally to other similar applications.

In many applications, we also want to approximate quantities related to $f(\mtx{A})$ such as the trace of $f(\mtx{A})$ and the Schatten $p$-norm of $f(\mtx{A})$, which for the trace of functions of sparse symmetric matrices has been studied in \cite{FrommerRinelliSchweitzer2023}. Since \RS{} and \RB{} provide an approximation to $f(\mtx{A})$, we can use $\widehat{f(\mtx{A})}$ to estimate the quantity of interest. For example, the trace of a matrix function is an important quantity appearing in network analysis through the Estrada index \cite{EstradaHigham2010} and statistical learning through the log-determinant estimation \cite{CortinovisKressner2021}, among others. Therefore the algorithms can also be used to estimate quantities related to matrix functions.

 \RS{} uses compressed sensing to recover the matrix function row by row. This algorithm is simple, however we may have access to extra information such as the sparsity pattern of the matrix. This information can potentially be used to speed up the algorithm and increase the accuracy of the algorithm. There are techniques in compressed sensing such as variance density sampling \cite{KrahmerWard2014}, which improves signal reconstruction using prior information. The improvement of \RS{} and the use of \RS{} and \RB{} in different applications are left for future work. 

\section*{Acknowledgements} The authors are grateful to Jared Tanner for his helpful discussions and for pointing out useful literature in compressed sensing. 


\bibliographystyle{abbrv}

\bibliography{cas-refs}






\end{document}